\documentclass[]{article}
\usepackage[utf8]{inputenc}
\usepackage{amsmath,amssymb,amsthm,mathtools}
\usepackage{graphicx}

\newtheorem{defn}{Definition}
\newtheorem{thm}{Theorem}

\newtheorem{cor}{Corollary}
\newtheorem{lem}{Lemma}
\newtheorem{prop}{Proposition}
\newtheorem*{prob}{Problem}
\newtheorem*{thm*}{Theorem}
\newtheorem*{ex}{Example}
\newtheorem*{conj}{Conjecture}

\title{Recovery of bivariate band limited functions using scattered translates of the Poisson kernel}
\author{Jeff Ledford}
\date{}

\begin{document}

\maketitle

\begin{abstract}
This paper continues the study of interpolation operators on scattered data.  We introduce the Poisson interpolation operator and prove various properties.  The main result concerns functions in the Paley-Wiener space $PW_{B_\beta}$, and shows that one may recover these functions from their samples on a complete interpolating sequence for $[-\delta,\delta]^2$ by using the Poisson interpolation operator, provided that $0<\beta < (3-\sqrt{8})\delta$.
\end{abstract}

\section{Introduction}
This paper continues the study of interpolation on scattered data.  The basic problem is as follows. 
\begin{prob}
Given a set of sampling nodes $X=\{x_j\}$ and data $Y=\{ y_j \}$, find a function $L(x)$, which satisfies $L(x_j)=y_j$.
\end{prob}
An answer depends on the geometry of the sampling nodes, properties of the data, and desired properties of the solution $L$, e.g. continuity, integrability, etc.  We are interested in the case that the sampling nodes $X$ are a so-called complete interpolating sequence for $L^2([-\pi,\pi]^2)$, and the data satisfies $Y\in l^2$.  These hypotheses have been studied before especially in the univariate case, in fact Lyubarskii and Madych showed in \cite{LM} that one may use tempered splines to solve this problem, while Schlumprecht and Sivakumar showed in \cite{SS} that the same is true using scattered translates of the Gaussian.  Along the same lines, the author showed in \cite{me} that one may use scattered translates of `regular interpolators' to solve this problem.  In addition to solving the interpolation problem, all of these schemes had the property that a limiting parameter allowed one to recover functions from the Paley-Wiener space, however the techniques used do not readily extend to the multivariate case.

An extension to the multivariate case was made by Bailey, Schlumprecht, and Sivakumar in \cite{BSS} using scattered translates of the Gaussian.  Our goal is to show that the same is true for the Poisson kernel, which is a regular interpolator.  We prove our results for $n=2$, and employ similar techniques to those found in \cite{BSS}.  One of the advantages of using the Poisson kernel is that one may consider complete interpolating systems on squares, which may be easily described.  The Gaussian does not share this property.  Our main result is the following.
\begin{thm*}
Let $\delta>0$ be given.  Suppose that $\{x_j:j\in\mathbb{N}   \}$ is a complete interpolating sequence for $[-\delta,\delta]^2$ and $f\in PW_{B_\beta}$ for some $0<\beta < (3-\sqrt{8})\delta$, then the Poisson interpolation operator associated to f, denoted $I_\alpha[f](x)$, satisfies $\lim_{\alpha\to\infty}I_\alpha[f](x)=f(x)$ in $L^2(\mathbb{R}^2)$ and uniformly.
\end{thm*}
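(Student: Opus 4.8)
The plan is to transfer everything to the Fourier side. Writing $Q=[-\delta,\delta]^2$ and $\widehat{P}_\alpha(\xi)=e^{-\alpha|\xi|}$, every Poisson interpolant takes the form $\widehat{I_\alpha[f]}(\xi)=\widehat{P}_\alpha(\xi)\,\sigma(\xi)$ with $\sigma(\xi)=\sum_j c_j\,e^{-i\langle x_j,\xi\rangle}$, so by Plancherel it suffices to control $\|\widehat{I_\alpha[f]}-\widehat f\|_{L^2(\mathbb{R}^2)}$. The hypothesis that $\{x_j\}$ is a complete interpolating sequence for $Q$ says exactly that $\{e^{i\langle x_j,\cdot\rangle}\}$ is a Riesz basis of $L^2(Q)$; I would fix its dual basis $\{g_j\}$ and record the samples $f(x_j)$ as the dual Riesz coefficients of $\widehat f$ on $Q$, which is legitimate since $\widehat f$ is supported in $B_\beta\subset Q$ once $\beta<\delta$.

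Next I would convert the interpolation conditions $I_\alpha[f](x_k)=f(x_k)$ into an operator equation on $L^2(Q)$. Folding the integral that defines $I_\alpha[f](x_k)$ over the dual lattice $\Lambda=2\delta\mathbb{Z}^2$ and introducing the periodization $\Theta_\alpha(\eta)=\sum_m\widehat{P}_\alpha(\eta+2\delta m)$, the conditions become $\mathcal{M}_\alpha\,\sigma=\widehat f$ on $Q$, where $\mathcal{M}_\alpha=\mathrm{mult}[\Theta_\alpha]+\mathcal{N}_\alpha$. The multiplier part is precisely what appears in the cardinal (lattice) theory, while the off-diagonal operator $\mathcal{N}_\alpha$ gathers the cross terms $e^{i\langle x_k-x_j,\,2\delta m\rangle}$ that fail to cancel because the nodes are scattered rather than on $\tfrac{\pi}{\delta}\mathbb{Z}^2$. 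On $Q$ the reconstruction error is then $\widehat{I_\alpha[f]}-\widehat f=-\mathcal{N}_\alpha\sigma$, and off $Q$ it is the tail $\widehat{P}_\alpha\sigma|_{Q^c}$; both must be shown to vanish as $\alpha\to\infty$.

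The lattice case is the benign model: there $\mathcal{N}_\alpha=0$, so $\sigma=\widehat f/\Theta_\alpha$ is supported in $B_\beta$, on which $\Theta_\alpha\ge\widehat{P}_\alpha\ge e^{-\alpha\beta}$, and a direct estimate of the nearest-neighbour leakage $e^{-\alpha(2\delta-2\beta)}$ yields convergence for \emph{every} $\beta<\delta$. The essential difficulty is that for scattered nodes $\mathcal{M}_\alpha$ is genuinely non-diagonal and degenerates as $\alpha\to\infty$: the entries $P_\alpha(x_j-x_k)$ all flatten to order $\alpha^{-2}$, and solving $\mathcal{M}_\alpha\sigma=\widehat f$ forces $\sigma$ to spread across all of $Q$, including the corners, where $\widehat{P}_\alpha$ is smallest, of order $e^{-\sqrt{2}\,\alpha\delta}$. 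I would therefore invert $\mathcal{M}_\alpha$ by a preconditioned Neumann series, treating $\mathrm{mult}[\Theta_\alpha]$ as the principal part and bounding the symmetrized perturbation $\mathrm{mult}[\Theta_\alpha^{-1/2}]\,\mathcal{N}_\alpha\,\mathrm{mult}[\Theta_\alpha^{-1/2}]$, so that the Riesz constants of $\{e^{i\langle x_j,\cdot\rangle}\}$ enter only as fixed multiplicative factors.

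The main obstacle is exactly this uniform-in-$\alpha$ contraction estimate $\|\mathrm{mult}[\Theta_\alpha^{-1/2}]\,\mathcal{N}_\alpha\,\mathrm{mult}[\Theta_\alpha^{-1/2}]\|<1$. The worst case pits the shallow decay of $\widehat{P}_\alpha$ toward a corner of $Q$, at distance $\sqrt{2}\,\delta$ from the origin, against the spectral gap that $B_\beta$ provides, and it is here that the square geometry forces the factor $\sqrt{8}=2\sqrt{2}$ into the exponents; demanding that the resulting series be a strict contraction uniformly in $\alpha$ is what produces the threshold $\beta<(3-\sqrt{8})\delta=(\sqrt{2}-1)^2\delta$. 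Once the contraction is in hand, $\sigma=\mathcal{M}_\alpha^{-1}\widehat f$ is controlled uniformly, and both $\|\mathcal{N}_\alpha\sigma\|$ and the $Q^c$-tail tend to $0$, giving convergence in $L^2(\mathbb{R}^2)$. Finally I would upgrade to uniform convergence via $\|I_\alpha[f]-f\|_\infty\le(2\pi)^{-2}\|\widehat{I_\alpha[f]}-\widehat f\|_{L^1}$, splitting the $L^1$ norm over the translates $Q+2\delta m$, applying Cauchy--Schwarz on each, and summing against the decay of $\widehat{P}_\alpha$ together with the uniform comparability of $\|\sigma\|_{L^2(Q+2\delta m)}$ to $\|\sigma\|_{L^2(Q)}$.
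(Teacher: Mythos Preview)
Your overall architecture matches the paper: pass to the Fourier side, derive an operator equation on $Q=[-\delta,\delta]^2$ linking $\widehat f$ to $\sigma$, bound the error on $Q$ and on $\mathbb{R}^2\setminus Q$ separately, and upgrade to uniform convergence via the $L^1$ norm of the transform. The gap is in the inversion step. The contraction $\|\Theta_\alpha^{-1/2}\,\mathcal{N}_\alpha\,\Theta_\alpha^{-1/2}\|<1$ cannot hold uniformly in $\alpha$: at a corner of $Q$ one has $\Theta_\alpha^{-1/2}\sim e^{\sqrt2\,\alpha\delta/2}$, while the off-diagonal piece $\mathcal{N}_\alpha$ decays only like $e^{-\alpha\delta}$ (the nearest translate $Q+2\delta m$ meets $Q$ along an edge where $|\xi|=\delta$), so the symmetrized perturbation grows like $e^{\alpha\delta(\sqrt2-1)}$. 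And $\beta$ cannot rescue this estimate: $\mathcal{M}_\alpha$ depends only on $\{x_j\}$ and $\alpha$, not on $f$, so there is no ``spectral gap that $B_\beta$ provides'' at the inversion stage; the iterates of a Neumann series immediately leave $B_\beta$.

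The paper avoids Neumann series entirely and instead exploits positivity of $\widehat P_\alpha$. Pairing the operator equation against $u_\alpha=\sigma$ on $Q$, every term is nonnegative, yielding $\langle\widehat f,u_\alpha\rangle_{L^2(Q)}\ge e^{-\sqrt2\,\alpha\delta}\|u_\alpha\|^2$, and Cauchy--Schwarz gives the a~priori bound $\|u_\alpha\|_{L^2(Q)}\le e^{\sqrt2\,\alpha\delta}\|\widehat f\|$. This bound \emph{does} blow up with $\alpha$, and that is the point: the threshold $\beta<(3-\sqrt8)\delta$ is not an invertibility condition but appears only afterward, when the resulting $e^{\alpha\delta(\sqrt2-1)}$ growth of $\|(I+B_\alpha M_\alpha)^{-1}\|$ and the $e^{-\alpha\delta}$ decay of the tail operator are balanced against the gain $\|e^{\alpha|\cdot|}\widehat f\|\le e^{\alpha\beta}\|\widehat f\|$ coming from $\operatorname{supp}\widehat f\subset B_\beta$; the $\sqrt8$ enters because the off-$Q$ tail forces a second application of $M_\alpha$ to a function no longer supported in $B_\beta$. (A secondary difference: the paper tiles $\mathbb{R}^2$ by dilated annuli $mQ\setminus(m-1)Q$ rather than by lattice translates, which pairs more naturally with the radial weight $e^{-\alpha|\xi|}$, but either tiling works once the positivity argument replaces the Neumann series.)
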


The rest of this paper is organized as follows.  In the next section, we cover definitions and basic facts and ends with some examples of complete interpolating sequences.  The subsequent section contains the definition and properties of the Poisson interpolation operator.  The fourth section contains the statements and proofs of the recovery results mentioned above.  Finally, an example and some additional comments are collected in the conclusion along with a conjecture pertaining to $\mathbb{R}^n$.


\section{Definitions and Basic Facts}
We adopt the following convention for the Fourier transform.
\begin{defn}
Let $n\in\mathbb{N}$ and suppose that $g\in L^1(\mathbb{R}^2)$, then the \emph{Fourier transform} of $g$ is denoted $\hat{g}$ and given by 
\[
\hat{g}(\xi) = (2\pi)^{-1}\int_{\mathbb{R}^n}g(x)e^{-i\langle \xi, x  \rangle}dx,
\] 
where $\langle u,v \rangle = \sum_{i=1}^{n}u_iv_i $ is the standard inner product on $\mathbb{R}^2$.
\end{defn}
This convention extends to an $L^2(\mathbb{R}^2)$ isomorphism, and we will occasionally use $\mathcal{F}[g](\xi)$ to represent the Fourier transform of $g\in L^2(\mathbb{R}^2)$.
We note the following Fourier transform pair that will be used throughout the remainder of the paper. For $\alpha>0$ we define the \emph{Poisson kernel} $g_\alpha:\mathbb{R}^2\to \mathbb{R}$ by
\[
g_\alpha(x)= (2\pi)^{-1} \alpha \left(\alpha^2+ \langle x,x \rangle  \right)^{-3/2},
\]
the Fourier transform is given by
\[
\hat{g}_{\alpha}(\xi) = e^{-\alpha |\xi|},
\]
where $|u| =\sqrt{ \langle u,u  \rangle }$.  There is a corresponding formula for $\mathbb{R}^n$, but we will not need it here.

We now define the class of functions from which we will sample.
\begin{defn}
Let $S\subset \mathbb{R}^n$ have positive measure.  Then we define the \emph{Paley-Wiener space} denoted $PW_S$, by
\[
PW_S=\{f\in L^2(\mathbb{R}^n): \mathcal{F}[f](\xi) = 0 \text{ a.e. } \xi\notin S    \}.
\]
\end{defn}
A function $f\in PW_S$ is often called \emph{band limited}.

\begin{defn}
Let $\mathcal{H}$ be a separable Hilbert space with inner product $\langle \cdot, \cdot \rangle $.  We call $\{e_j :j\in\mathbb{Z}  \}$ a \emph{Riesz basis} if every $h\in\mathcal{H}$ has a unique representation of the form
\[
\sum_{j\in\mathbb{Z}}a_je_j, \qquad\text{with}\qquad\{a_j:j\in\mathbb{Z}\}\in l^2.
\]
\end{defn}
To each Riesz basis $ \{e_j:j\in\mathbb{Z}\}$, there is an associated \emph{dual Riesz basis} $\{e^*_{j}:j\in\mathbb{Z}\}$ given by the coefficient functionals.  Additionally, we have the so called \emph{Riesz basis inequality}, which provides a constant $B\geq 1$ such that
\begin{equation}\label{RB}
B^{-1}\| \{a_j:j\in\mathbb{Z}\} \|_{l^2} \leq \| \sum_{j\in\mathbb{Z}}a_je_j  \|_{\mathcal{H}} \leq B \| \{a_j:j\in\mathbb{Z}   \}  \|_{l^2}
\end{equation}
for every $\{a_j:j\in\mathbb{Z}    \}\in l^2$.

\begin{defn}
Let $S\subset \mathbb{R}^n$ have positive measure, then a sequence of points $\{ x_j:j\in\mathbb{Z}  \}$ is said to be a \emph{complete interpolating sequence for S} provided that the corresponding sequence of exponentials $\{ e^{i\langle \cdot, x_j \rangle}: j\in\mathbb{Z}   \}$ is a Riesz basis for $L^2(S)$.
\end{defn}
Henceforth we will abbreviate complete interpolating sequence as CIS.  We want our set of sampling nodes to be a CIS for an appropriate set $S$.  Depending on the geometry of $S$ we have different examples.

\begin{ex}
If $\{x_j:j\in\mathbb{Z}\}$ and $\{ y_j :j\in\mathbb{Z}  \}$ are both CISs for $L^2([-\pi,\pi])$, then $\{ (x_j,y_k):j,k\in\mathbb{Z}  \}$ is a CIS for $L^2([-\pi,\pi]^2)$.
\end{ex}
Our next example is due to Favier and Zalik and may be found in \cite{FZ}.  It is a generalization of Kadec's `$1/4$-theorem' and shows that provided we do not stray too far from the integer lattice the resulting sequence will be a Riesz basis.
\begin{ex}
Let $n\in\mathbb{N}$ and suppose that the sequence $\{x_j:j\in\mathbb{Z}^n   \}$ satisfies
$
|j-x_j|\leq L
$ for all $j\in\mathbb{Z}^n$, where $0<L<\pi^{-1}\arccos( (1-9^{1-n})/\sqrt{2})-1/4$, then $\{x_j:j\in\mathbb{Z}^n   \}$ is a CIS for $L^2([-\pi,\pi]^n)$.
\end{ex}
The bound on $L$ is used to bound an auxiliary quantity in the theorem and is more stringent than we need.  We note that when $n=2$ we may take $L\leq 1/20$.

For $n=2$, Lyubarskii and Rashkovskii have constructed CISs for convex symmetric polygons using the zeros of associated entire functions, details are contained in \cite{LR}.  That paper also suggests that their methods may be extended to higher dimensions, but for our purposes, we will need only the examples for $n=2$.

\begin{lem}
Suppose that $M$ is a convex polygon symmetric about the origin and $\{ x_j: j\in\mathbb{Z}   \}$ is a CIS for $L^2(M)$, if $f\in PW_M$, then $\{f(x_j):j\in\mathbb{Z}  \}\in l^2 $.
\end{lem}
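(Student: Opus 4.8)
The plan is to realize each sample $f(x_j)$ as an $L^2(M)$ inner product of $\hat f$ against a complex exponential, and then to read off square-summability directly from the upper half of the Riesz basis inequality \eqref{RB}.

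First I would use that $M$ is a bounded convex polygon, so $\hat f \in L^2(M) \subset L^1(M)$; hence Fourier inversion gives, for every $x\in\mathbb{R}^2$,
\[
f(x) = (2\pi)^{-1}\int_M \hat f(\xi)\, e^{i\langle \xi, x\rangle}\, d\xi,
\]
and in particular $f$ has a continuous representative, so the values $f(x_j)$ are well defined. Evaluating at $x=x_j$ exhibits $f(x_j)$ as
\[
f(x_j) = (2\pi)^{-1}\big\langle \hat f,\ e^{-i\langle \cdot, x_j\rangle}\big\rangle_{L^2(M)}.
\]

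The main point is then to control $\sum_j |f(x_j)|^2$, which I would do by duality. For a finitely supported sequence $\{a_j\}$ with $\|\{a_j\}\|_{l^2}\le 1$, Cauchy--Schwarz gives
\[
\Big|\sum_j a_j\, \overline{f(x_j)}\Big| = (2\pi)^{-1}\Big|\big\langle \textstyle\sum_j a_j\, e^{-i\langle\cdot,x_j\rangle},\ \hat f\big\rangle_{L^2(M)}\Big| \le (2\pi)^{-1}\,\big\|\textstyle\sum_j a_j\, e^{-i\langle\cdot,x_j\rangle}\big\|_{L^2(M)}\,\|\hat f\|_{L^2(M)}.
\]
Here is where the symmetry hypothesis enters: the change of variables $\xi\mapsto -\xi$ preserves both $M$ and Lebesgue measure, so $\|\sum_j a_j e^{-i\langle\cdot,x_j\rangle}\|_{L^2(M)} = \|\sum_j a_j e^{i\langle\cdot,x_j\rangle}\|_{L^2(M)}$, and the right-hand inequality in \eqref{RB} for the Riesz basis $\{e^{i\langle\cdot,x_j\rangle}\}$ bounds this by $B\|\{a_j\}\|_{l^2}\le B$. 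Taking the supremum over all such $\{a_j\}$ yields $\big(\sum_j |f(x_j)|^2\big)^{1/2} \le (2\pi)^{-1}B\,\|\hat f\|_{L^2(M)} < \infty$, which is the assertion.

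The only genuinely delicate point is transferring the upper Riesz bound from $\{e^{i\langle\cdot,x_j\rangle}\}$ to the conjugate family $\{e^{-i\langle\cdot,x_j\rangle}\}$; the symmetry of $M$ makes this immediate, and this is presumably why symmetry is assumed. (Alternatively, one could observe that complex conjugation is an antiunitary map of $L^2(M)$ and hence carries any Riesz basis to a Riesz basis with the same bounds, so symmetry is not strictly necessary for this step.) Everything else---the inversion formula, the continuity of $f$, and the duality characterization of the $l^2$ norm---is routine.
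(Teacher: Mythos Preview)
Your proof is correct and takes essentially the same route as the paper: both identify $2\pi f(x_j)$ as the inner product $\langle \hat f,\, e^{-i\langle\cdot,x_j\rangle}\rangle_{L^2(M)}$ via Fourier inversion and then read off square-summability from the Riesz basis inequality \eqref{RB}. The only cosmetic difference is that the paper phrases the last step as an expansion of $\hat f$ in the dual basis $\{e_j^*\}$, whereas you extract the Bessel-type bound directly by pairing against finitely supported $l^2$ sequences; your explicit handling of the sign/symmetry issue (transferring the upper bound from $\{e^{i\langle\cdot,x_j\rangle}\}$ to $\{e^{-i\langle\cdot,x_j\rangle}\}$) is in fact more careful than the paper's.
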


\begin{proof}
Let $e_j=e^{-i\langle x_j,\xi \rangle}$, and $e_j^{*}$ be the associated coordinate functional.  We have
\begin{align*}
\mathcal{F}[f](\xi)=\sum_{j\in\mathbb{Z}}\langle \mathcal{F}[f], e_j  \rangle e_j^*(\xi) = 2\pi\sum_{j\in\mathbb{Z}}f(x_j)e_j^*(\xi), 
\end{align*}
since $\{ e^*_j  :j\in\mathbb{Z} \}$ is also a Riesz basis, we have
\begin{align*}
4\pi^2\sum_{j\in\mathbb{Z}}|f(x_j)|^2\leq C^2 \| \mathcal{F}[f]  \|_{L^2(M)} = C^2\| f \|_{L^2(\mathbb{R}^2)}.
\end{align*}
\end{proof}

We remark that our results will be proved for squares, with straightforward modifications leading to the same result for general convex polygons.


\section{Poisson Interpolants}

Our first goal is to show that there is a solution to the interpolation problem.  In what follows we fix $\delta>0$, define $S_\delta=[-\delta,\delta]^2$, and let $\{x_j:j\in\mathbb{N}\}$ be a fixed but otherwise arbitrary CIS for $L^2(S_\delta)$.  We note that $h(\xi)\in L^2(S_\delta)$ enjoys the representation
\begin{equation}
h(\xi)=\sum_{j\in\mathbb{N}}h_j e^{-i\langle \xi, x_j \rangle }, \qquad \xi\in S_\delta.
\end{equation}
If we fix $a\in \mathbb{R}^2$, then
\[
\left\| h \right\|_{L^2(a+S_\delta)}=\left\| \sum_{j\in\mathbb{N}}h_j e^{-i\langle a,x_j \rangle}e^{-i\langle \xi, x_j \rangle }  \right\|_{L^2(S_\delta)}\leq B^2 \|  h\|_{L^2(S_\delta)},
\]
where $B$ is the associated Riesz basis constant defined in \eqref{RB}.  Thus extending $h(\xi)$ to all of $L^2(\mathbb{R}^2)$, we see that
\[
H(\xi)= \sum_{j=1}^{\infty}h_j e^{-i\langle \xi, x_j \rangle}, \qquad \xi\in\mathbb{R}^2
\]
defines an $L^2_{loc}(\mathbb{R}^2)$ function.  For $m=2,3,4,\dots$, we define $A_m:L^2(S_\delta)\to L^2(S_\delta)$ by
\[
A_m[h](\xi)= H(m\xi)\chi_{S_\delta\setminus(1-1/m)S_\delta}(\xi),
\] 
which, along with its adjoint $A_m^*$, satisfies the following bound
\begin{align}
&\left\|  A_m[h] \right\|^2_{L^2(S_\delta)} = \int_{S_\delta\setminus(1-1/m)S_\delta}|H(m\xi)|^2d\xi  \nonumber\\
&= m^{-2}\int_{mS_\delta\setminus(m-1)S_\delta}|H(\xi)|^2 \leq 4(m-1)m^{-2} B^4 \| h  \|^2_{L^2(S_\delta)} \label{A bnd}
\end{align}
The last inequality comes from the fact that $4(m-1)$ copies of $S_\delta$ are needed to cover $m S_\delta\setminus (m-1)S_\delta$.  The following proposition guarantees that we can solve the interpolation problem.

\begin{prop}
Let $\delta>0$ be given and suppose that $\{x_j:j\in\mathbb{N}\} $ is a CIS for $S_\delta$.  If $\{a_j:j\in\mathbb{N}\}\in l^2$, then for all $\alpha>0$ we have
\[
\dfrac{e^{-\sqrt{2}\alpha\delta}}{B^2} \sum_{j=1}^{\infty}|a_j|^2 \leq 2\pi \sum_{j,k=1}^{\infty}a_j\bar{a}_kg_{\alpha}(x_k-x_j) \leq \left( B^2 + \dfrac{4B^6e^{-\alpha\delta}}{(1-e^{-\alpha\delta})^2}  \right) \sum_{j=1}^{\infty}|a_j|^2,
\]
where $B>0$ is the Riesz basis constant defined in \eqref{RB}.
\end{prop}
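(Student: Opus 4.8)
The plan is to recognize the middle quantity as a weighted $L^2$ integral of the exponential sum $H(\xi)=\sum_{j=1}^{\infty}a_j e^{-i\langle \xi, x_j\rangle}$ introduced above, and then to exploit the shell estimate \eqref{A bnd} already in hand. Since for $n=2$ the transform convention is symmetric, Fourier inversion gives $g_\alpha(x)=(2\pi)^{-1}\int_{\mathbb{R}^2}e^{-\alpha|\xi|}e^{i\langle \xi, x\rangle}\,d\xi$. Substituting $x=x_k-x_j$, multiplying by $2\pi$, and summing, I would obtain
\[
2\pi\sum_{j,k=1}^{\infty}a_j\bar a_k\, g_\alpha(x_k-x_j)=\int_{\mathbb{R}^2}e^{-\alpha|\xi|}\Bigl|\sum_{j=1}^{\infty}a_j e^{-i\langle \xi, x_j\rangle}\Bigr|^2 d\xi=\int_{\mathbb{R}^2}e^{-\alpha|\xi|}|H(\xi)|^2\,d\xi .
\]
The interchange of summation and integration is legitimate because $e^{-\alpha|\xi|}$ is integrable against $|H|^2$ (which the bounds below confirm); rigorously I would verify the identity for finite truncations of $\{a_j\}$ and pass to the limit. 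Once this representation is secured, both inequalities reduce to estimating a single nonnegative integral.

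For the lower bound I would discard everything outside $S_\delta$: since the integrand is nonnegative, $\int_{\mathbb{R}^2}e^{-\alpha|\xi|}|H|^2\ge \int_{S_\delta}e^{-\alpha|\xi|}|H|^2$. On $S_\delta=[-\delta,\delta]^2$ one has $|\xi|\le\sqrt{2}\,\delta$, so $e^{-\alpha|\xi|}\ge e^{-\sqrt 2\,\alpha\delta}$; moreover on $S_\delta$ the restriction of $H$ is exactly the Riesz-basis expansion, so \eqref{RB} yields $\|H\|^2_{L^2(S_\delta)}\ge B^{-2}\sum_{j}|a_j|^2$. Combining these two facts delivers the claimed lower constant $e^{-\sqrt 2\,\alpha\delta}B^{-2}$.

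The hard part is the upper bound, where the structure mirrors the derivation of \eqref{A bnd}. I would split $\mathbb{R}^2=S_\delta\cup(\mathbb{R}^2\setminus S_\delta)$. On $S_\delta$ the weight is at most $1$, and \eqref{RB} gives $\int_{S_\delta}e^{-\alpha|\xi|}|H|^2\le\|H\|^2_{L^2(S_\delta)}\le B^2\sum_j|a_j|^2$, which is the first term. For the exterior I would decompose into the shells $mS_\delta\setminus(m-1)S_\delta$ for $m\ge 2$; on each such shell $|\xi|\ge(m-1)\delta$, so $e^{-\alpha|\xi|}\le e^{-\alpha(m-1)\delta}$, while covering the shell by $4(m-1)$ translates of $S_\delta$ and applying the shift bound $\|H\|_{L^2(a+S_\delta)}\le B^2\|H\|_{L^2(S_\delta)}$ (exactly as in \eqref{A bnd}) gives $\int_{mS_\delta\setminus(m-1)S_\delta}|H|^2\le 4(m-1)B^4\|H\|^2_{L^2(S_\delta)}\le 4(m-1)B^6\sum_j|a_j|^2$. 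Summing over $m$ reduces the exterior contribution to $4B^6\sum_{l\ge 1}l\,q^l$ with $q=e^{-\alpha\delta}<1$ after reindexing $l=m-1$; the standard evaluation $\sum_{l\ge 1}l q^l=q/(1-q)^2$ produces precisely the factor $4B^6 e^{-\alpha\delta}/(1-e^{-\alpha\delta})^2$, and adding the $S_\delta$ term gives the stated upper constant. The only delicate point I foresee is the bookkeeping in the shell estimate — tracking the covering constant $4(m-1)$ and the powers of $B$ so that the summed series matches the closed form — but since this is the same covering argument underlying \eqref{A bnd}, I would invoke that estimate directly rather than redo it.
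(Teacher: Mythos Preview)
Your argument is correct and matches the paper's proof in every essential respect: both express the middle quantity as $\int_{\mathbb{R}^2}e^{-\alpha|\xi|}|H(\xi)|^2\,d\xi$, obtain the lower bound by restricting to $S_\delta$ and using $|\xi|\le\sqrt{2}\,\delta$ together with \eqref{RB}, and obtain the upper bound by splitting into $S_\delta$ plus the shells $mS_\delta\setminus(m-1)S_\delta$, applying the covering estimate behind \eqref{A bnd}, and summing $\sum_{l\ge1}l\,e^{-\alpha\delta l}$. The only cosmetic difference is that the paper routes the shell estimate through the rescaling operators $A_m$ and works with finite truncations $H_M,H_N$ before passing to the limit, whereas you work directly on the shells and note the truncation step in passing; the content is identical.
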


\begin{proof}  
We begin by proving the upper bound via the Fourier inversion formula and the dominated convergence theorem.  Letting $H_M(\xi)=\sum_{j=1}^{M}a_j e^{-\langle \xi, x_j \rangle}$, we have
\begin{align*}
&2\pi\sum_{j=1}^{M}\sum_{k=1}^{N}a_j\bar{a}_k g\alpha(x_k-x_j) = \int_{\mathbb{R}^2}e^{-\alpha|\xi|} H_M(\xi)\overline{H_N(\xi)}d\xi  \\
=&\int_{S_\delta} e^{-\alpha|\xi|} H_M(\xi)\overline{H_N(\xi)}d\xi \\
& \quad+ \sum_{m=2}^{\infty}m^2  \int_{S_\delta\setminus(1-\frac{1}{m})S_\delta}e^{-\alpha m|\xi|}A_m[H_M](\xi)\overline{A_N[H_N](\xi)}d\xi\\
\leq & \| H_M \|_{L^2(S_\delta)}  \| H_N \|_{l^2(S_\delta)} + \sum_{m=2}^{\infty}m^2e^{-\alpha\delta(m-1)} \| A_m[H_M]\|_{L^2(S_\delta)}\| A_m[H_N]\|_{L^2(S_\delta)}\\
\leq & B^2\sum_{j=1}^{\infty}|a_j|^2 +  4B^4\sum_{m=2}^{\infty}(m-1)e^{-\alpha\delta(m-1)}\| H_M \|_{L^2(S_\delta)}\| H_N \|_{L^2(S_\delta)}\\
\leq & B^2\sum_{j=1}^{\infty}|a_j|^2 + 4B^6 e^{-\alpha\delta}/(1-e^{-\alpha\delta})^2\sum_{j=1}^{\infty}|a_j|^2
\end{align*}
Taking the appropriate limits yields the desired upper bound, we also see that we can interchange the order summation and integration.  This allows us to provide the lower bound
\begin{align*}
& 2\pi \sum_{j,k=1}^{\infty}  a_j\bar{a}_kg_{\alpha}(x_k-x_j) \geq \int_{S_\delta}e^{-\alpha|\xi|}\left| H_{\infty}(\xi)  \right|^2d\xi \\
&\geq e^{-\sqrt{2}\alpha\delta}B^{-2}\sum_{j=1}^{\infty}|a_j|^2.
\end{align*}
\end{proof}
in light of the lemma at the end of the previous section, if $f\in PW_{S_\delta}$, then we may find interpolating coefficients $\{a_j:j\in\mathbb{N}\}\in l^2$ such that
\[
I_\alpha[f](x)=\sum_{j=1}^{\infty}a_jg_\alpha(x-x_j)
\]
satisfies $I_\alpha[f](x_k)=f(x_k)$ for all $k\in\mathbb{N}$.  We call this the \emph{Poisson interpolant} associated to $f$.

\begin{prop}
Let $\alpha,\delta>0$ be given and suppose that $\{x_j:j\in\mathbb{N}\}$ is a CIS for $L^2(S_\delta)$, then $I_\alpha[f](x)\in L^2(\mathbb{R}^2)\cap C(\mathbb{R}^2)$ for any $f\in PW_{S_\delta}$.
\end{prop}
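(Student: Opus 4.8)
The plan is to pass to the Fourier transform and show that $\mathcal{F}[I_\alpha[f]]$ lies in $L^1(\mathbb{R}^2)\cap L^2(\mathbb{R}^2)$. Membership in $L^2$ then yields $I_\alpha[f]\in L^2(\mathbb{R}^2)$ by Plancherel, while membership in $L^1$ forces $I_\alpha[f]$ to agree almost everywhere with a continuous (indeed bounded) function, namely the inverse Fourier transform of an integrable function, giving $I_\alpha[f]\in C(\mathbb{R}^2)$.

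First I would identify the transform. Since $\mathcal{F}[g_\alpha(\cdot-x_j)](\xi)=e^{-i\langle\xi,x_j\rangle}e^{-\alpha|\xi|}$, the partial sum $S_N(x)=\sum_{j=1}^{N}a_j g_\alpha(x-x_j)$ satisfies $\mathcal{F}[S_N](\xi)=e^{-\alpha|\xi|}H_N(\xi)$, where $H_N(\xi)=\sum_{j=1}^{N}a_j e^{-i\langle\xi,x_j\rangle}$. The claim is that $\mathcal{F}[S_N]\to e^{-\alpha|\xi|}H(\xi)$ in $L^2(\mathbb{R}^2)$, with $H$ the $L^2_{loc}$ function from Section 3. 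To see this I would decompose $\mathbb{R}^2=S_\delta\cup\bigcup_{m\geq 2}(mS_\delta\setminus(m-1)S_\delta)$ and estimate $\int e^{-2\alpha|\xi|}|H_N-H|^2\,d\xi$. On $S_\delta$ the integrand is controlled by \eqref{RB}; on the $m$-th annulus one has $|\xi|\geq(m-1)\delta$, so the weight is at most $e^{-2\alpha(m-1)\delta}$, while \eqref{A bnd} gives $\int_{mS_\delta\setminus(m-1)S_\delta}|H_N-H|^2\,d\xi\leq 4(m-1)B^6\sum_{j>N}|a_j|^2$. Summing over $m$ leaves a convergent series $\sum_{m\geq 2}(m-1)e^{-2\alpha(m-1)\delta}$ multiplied by $\sum_{j>N}|a_j|^2$, which tends to $0$. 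Hence $\mathcal{F}[I_\alpha[f]](\xi)=e^{-\alpha|\xi|}H(\xi)$, and the identical computation with $H$ in place of $H_N-H$ shows this function lies in $L^2(\mathbb{R}^2)$, establishing $I_\alpha[f]\in L^2(\mathbb{R}^2)$.

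For continuity I would establish $e^{-\alpha|\xi|}H(\xi)\in L^1(\mathbb{R}^2)$ using the same annular decomposition together with Cauchy--Schwarz on each piece. The $m$-th annulus contributes at most $e^{-\alpha(m-1)\delta}\,\|H\|_{L^2(mS_\delta\setminus(m-1)S_\delta)}\,\bigl|mS_\delta\setminus(m-1)S_\delta\bigr|^{1/2}$; by \eqref{A bnd} the $L^2$ factor is $O(\sqrt{m})$, and the measure $(2m\delta)^2-(2(m-1)\delta)^2=4\delta^2(2m-1)$ is $O(m)$, so each term is $O\bigl(m\,e^{-\alpha(m-1)\delta}\bigr)$ and the series converges. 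Adding the trivially finite contribution from $S_\delta$ gives $e^{-\alpha|\xi|}H\in L^1(\mathbb{R}^2)$. Consequently its inverse Fourier transform, which by the $L^2$ identification above coincides with $I_\alpha[f]$, is continuous.

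I expect the only genuine obstacle to be the bookkeeping in the first step: one must justify that the series defining $I_\alpha[f]$ converges in $L^2$ to a function whose transform is exactly $e^{-\alpha|\xi|}H$, rather than simply assuming pointwise convergence of $\sum_j a_j g_\alpha(x-x_j)$. Once the weighted $L^2$ bound \eqref{A bnd} is in hand, the remaining estimates are routine geometric-series manipulations, since the exponential decay $e^{-2\alpha(m-1)\delta}$ (respectively $e^{-\alpha(m-1)\delta}$) dominates the polynomial growth arising both from the annulus covering in \eqref{A bnd} and from the Lebesgue measure of the annuli.
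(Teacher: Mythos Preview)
Your proposal is correct and follows essentially the same route as the paper: identify $\mathcal{F}[I_\alpha[f]](\xi)=e^{-\alpha|\xi|}H(\xi)$, then use the annular decomposition $\mathbb{R}^2=\bigcup_{m\geq 1}\bigl(mS_\delta\setminus(m-1)S_\delta\bigr)$ together with \eqref{A bnd} and Cauchy--Schwarz to show this lies in $L^1\cap L^2$. The paper phrases the annular estimate via the operators $A_m$ while you work directly on the annuli, and you are more explicit than the paper about why the partial sums converge in $L^2$ to something whose transform really is $e^{-\alpha|\xi|}H$ (the paper simply calls this the ``putative Fourier transform''); otherwise the arguments coincide.
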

\begin{proof}
This follows from properties of the putative Fourier transform 
\[
\mathcal{F}[I_\alpha[f]](\xi)\text{ `=' } e^{-\alpha|\xi|}\sum_{j=1}^{\infty}a_je^{-i\langle \xi,x_j \rangle}.
\]
Using methods analogous to those used to prove the previous proposition we can show that $\mathcal{F}[I_\alpha[f]](\xi)\in L^1(\mathbb{R}^2)\cap L^2(\mathbb{R}^2)$, which completes the proof.  We will show $\mathcal{F}[I_\alpha[f]](\xi)\in L^1(\mathbb{R}^2)$, the other case being similar.
\begin{align*}
&\int_{\mathbb{R}^2}e^{-\alpha|\xi|}\left| \sum_{j=1}^{\infty}a_je^{-i\langle \xi,x_j \rangle}   \right|d\xi \leq \int_{S_\delta} \left| \sum_{j=1}^{\infty}a_je^{-i\langle \xi,x_j \rangle}   \right|d\xi  \\
&\quad + \sum_{m=2}^{\infty}m^2e^{-\alpha\delta(m-1)}\int_{S_\delta}\left|A_m \left[\sum_{j=1}^{\infty}a_je^{-i\langle \cdot,x_j \rangle} \right](\xi)  \right|d\xi\\
&\leq B^2\sum_{j=1}^{\infty}|a_j|^2+ 4B^6e^{-\alpha\delta}(1-e^{-\alpha\delta})^{-2}\sum_{j=1}^{\infty}|a_j|^2
\end{align*}
We have used the Cauchy-Schwarz inequality to arrive at the final inequality.  Since $\{a_j:j\in\mathbb{N}  \}\in l^2$, $\mathcal{F}[I_\alpha[f]](\xi)\in L^1(\mathbb{R}^2)$.  A similar calculation for $\mathcal{F}[I_\alpha[f]](\xi)\in L^2(\mathbb{R}^2)$ is omitted.
\end{proof}

Our next proposition is crucial to our argument.

\begin{prop}
Let $\alpha,\delta>0$ be given and suppose that $\{x_j:j\in\mathbb{N}  \}$ is a CIS for $S_\delta$.  If $f\in PW_{S_\delta}$, then $\hat{f}$ enjoys the following representation
\begin{equation}\label{represent}
\hat{f}(\xi)=e^{-\alpha|\xi|}u_\alpha(\xi)+\sum_{m=2}^{\infty}m^2 A^{*}_{m}\left[ e^{-\alpha m|\cdot|}A_m[u_\alpha](\cdot)  \right](\xi),
\end{equation}
for almost every $\xi\in S_\delta$.  Here $u_\alpha(\xi)=\sum_{j=1}^{\infty}a_je^{i\langle \xi, x_j \rangle}$, where $\{ a_j:j\in\mathbb{N}\}$ are the interpolating coefficients in the expansion of $I_\alpha[f](x)$.
\end{prop}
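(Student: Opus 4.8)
The plan is to read the representation \eqref{represent} off the interpolation conditions after passing to the Fourier side. Since $f\in PW_{S_\delta}$, we have $\hat f\in L^2(S_\delta)$; I will assemble the candidate right-hand side of \eqref{represent}, call it $G$, verify that it too lies in $L^2(S_\delta)$, and then prove $\hat f=G$ a.e. Because $\{x_j:j\in\mathbb{N}\}$ is a CIS, the exponentials $\{e^{-i\langle\cdot,x_j\rangle}\}$ form a (hence complete) Riesz basis of $L^2(S_\delta)$, so it suffices to check that $\hat f$ and $G$ have the same inner product against every $e^{-i\langle\cdot,x_k\rangle}$. These inner products are precisely the sample values: by Fourier inversion $2\pi f(x_k)=\int_{S_\delta}\hat f(\xi)e^{i\langle\xi,x_k\rangle}\,d\xi$, and showing that $\langle G,e^{-i\langle\cdot,x_k\rangle}\rangle_{L^2(S_\delta)}=2\pi I_\alpha[f](x_k)$ will let the interpolation identity $f(x_k)=I_\alpha[f](x_k)$ close the argument.

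First I would compute $2\pi I_\alpha[f](x_k)$. By the previous proposition $\mathcal{F}[I_\alpha[f]]\in L^1(\mathbb{R}^2)$ and equals $e^{-\alpha|\xi|}u_\alpha(\xi)$, so pointwise Fourier inversion is legitimate and yields $2\pi I_\alpha[f](x_k)=\int_{\mathbb{R}^2}e^{-\alpha|\xi|}u_\alpha(\xi)e^{i\langle\xi,x_k\rangle}\,d\xi$. I then split $\mathbb{R}^2$ into $S_\delta$ together with the nested shells $mS_\delta\setminus(m-1)S_\delta$, $m\geq 2$, and on the $m$-th shell substitute $\xi=m\eta$. This produces the Jacobian factor $m^2$, replaces the weight by $e^{-\alpha m|\eta|}$, pulls the integration domain back to $S_\delta\setminus(1-1/m)S_\delta$, and turns $u_\alpha(\xi)$ into $u_\alpha(m\eta)$, which on that domain is exactly $A_m[u_\alpha](\eta)$.

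The key step, and the only place where the adjoints $A_m^*$ must enter, is the exponential that survives the substitution. After $\xi=m\eta$ the factor $e^{i\langle\xi,x_k\rangle}$ becomes the \emph{dilated} exponential $e^{im\langle\eta,x_k\rangle}$ rather than $e^{i\langle\eta,x_k\rangle}$, so the shell integral is not yet an inner product against a basis vector. The resolution is the identity $A_m[e^{-i\langle\cdot,x_k\rangle}](\eta)=e^{-im\langle\eta,x_k\rangle}\chi_{S_\delta\setminus(1-1/m)S_\delta}(\eta)$, which holds because the exponential-sum extension of a single exponential is itself; thus the dilated exponential appearing on the $m$-th shell is exactly $A_m$ applied to the basis vector $e^{-i\langle\cdot,x_k\rangle}$. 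Moving $A_m$ across the $L^2(S_\delta)$ inner product onto its adjoint rewrites the $m$-th contribution as $m^2\langle A_m^*[e^{-\alpha m|\cdot|}A_m[u_\alpha]],e^{-i\langle\cdot,x_k\rangle}\rangle_{L^2(S_\delta)}$, and summing the central term with all shells identifies $2\pi I_\alpha[f](x_k)$ with $\langle G,e^{-i\langle\cdot,x_k\rangle}\rangle_{L^2(S_\delta)}$, where $G$ is the right-hand side of \eqref{represent}.

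To finish, the interpolation conditions give $\langle\hat f-G,e^{-i\langle\cdot,x_k\rangle}\rangle_{L^2(S_\delta)}=0$ for every $k$, and completeness of the Riesz basis forces $\hat f=G$ a.e. on $S_\delta$. The remaining technical points are exactly those already handled in the two preceding propositions: the termwise change of variables, the interchange of summation with integration, and the convergence of the series defining $G$ in $L^2(S_\delta)$ all follow from the bound \eqref{A bnd} on $\|A_m\|$ combined with $e^{-\alpha m|\xi|}\le e^{-\alpha\delta(m-1)}$ on $S_\delta\setminus(1-1/m)S_\delta$ (since $|\xi|\ge(1-1/m)\delta$ there), which makes the tail geometrically small. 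I expect the main obstacle to be bookkeeping rather than any new estimate: one must check that the folding identity for the exponentials meshes correctly with the definition of $A_m^*$ and that every rearrangement above is justified in the $L^2$ and $L^1$ senses provided by the earlier results.
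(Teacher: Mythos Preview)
Your proposal is correct and follows essentially the same route as the paper: start from the interpolation identities, apply Fourier inversion to write $2\pi I_\alpha[f](x_k)$ as an integral of $e^{-\alpha|\xi|}u_\alpha(\xi)$ over $\mathbb{R}^2$, break that integral into the central square plus the shells $mS_\delta\setminus(m-1)S_\delta$, rescale each shell to recognize $A_m[u_\alpha]$ and $A_m[e^{-i\langle\cdot,x_k\rangle}]$, pass to the adjoint, and conclude by completeness of the Riesz basis. Your write-up is in fact a bit more explicit than the paper's about why the dilated exponential is $A_m$ of a basis vector and about the $L^2(S_\delta)$ membership of the right-hand side, but the argument is the same.
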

\begin{proof}
The interpolation condition $f(x_k)=I_\alpha[f](x_k)$ leads to
\[
\int_{S_\delta}\hat{f}(\xi)e^{i\langle \xi,x_k \rangle}d\xi = \int_{\mathbb{R}^2}\mathcal{F}[I_\alpha[f]](\xi) e^{i\langle \xi,x_k  \rangle }d\xi
\]
for all $k\in\mathbb{N}$.  Writing $\mathcal{F}[I_\alpha[f]](\xi)=e^{-\alpha|\xi|}u_\alpha(\xi)$, we have
\begin{align*}
&\int_{\mathbb{R}^2}\mathcal{F}[I_\alpha[f]](\xi) e^{i\langle \xi,x_k  \rangle }d\xi = \int_{S_\delta}e^{-\alpha|\xi|}u_{\alpha}(\xi)e^{i\langle \xi,x_k  \rangle }d\xi\\
 &\quad + \sum_{m=2}^{\infty} m^2 \int _{S_\delta} e^{-\alpha m |\xi|} A_m[u_\alpha(\cdot)](\xi)A_m[ e^{i\langle \cdot,x_k  \rangle }](\xi)d\xi \\
=& \int_{S_\delta}e^{-\alpha|\xi|}u_{\alpha}(\xi)e^{i\langle \xi,x_k  \rangle }d\xi \\
&\quad + \sum_{m=2}^{\infty} m^2 \int _{S_\delta} A^{*}_{m}\left[ e^{-\alpha m |\cdot|} A_m[u_\alpha(\cdot)\right](\xi) e^{i\langle \xi,x_k  \rangle }   d\xi \\
&=\int_{S_\delta}\left( e^{-\alpha|\xi|}u_{\alpha}(\xi) + \sum_{m=2}^{\infty} m^2  A^{*}_{m}\left[ e^{-\alpha m |\cdot|} A_m[u_\alpha(\cdot)\right](\xi)  \right) e^{i\langle \xi , x_k  \rangle}d\xi
\end{align*}
Here we have tacitly used the dominated convergence theorem to interchange order of summation and integration.  Since $\{ x_j:j\in\mathbb{N} \}$ is a CIS for $L^2(S_\delta)$, the proposition follows.
\end{proof}
To simplify notation somewhat, we introduce $B_\alpha:L^2(S_\delta)\to L^2(S_\delta)$, given by
\[
B_\alpha[h](\xi)=\sum_{m=2}^{\infty}m^2A^{*}_{m}\left[ e^{-\alpha m |\cdot|A_{m}[h](\cdot)}  \right](\xi).
\]
This operator is bounded, in fact
\begin{equation}\label{b bnd}
\| B_\alpha[h] \|_{L^2(S_\delta)} \leq 4B^4e^{-\alpha\delta}(1-e^{-\alpha\delta})^{-2}\|  h \|_{L^2(S_\delta)}.
\end{equation}
We will also make use of the multiplication operator $M_\alpha:L^2(S_\delta)\to L^2(S_\delta)$ defined by
\[
M_\alpha[h](\xi)=e^{\alpha|\xi|}h(\xi),
\]
which satisfies the obvious bound
\[
\| M_\alpha[h] \|_{L^2(S_\delta)}\leq e^{\sqrt{2}\alpha\delta}\| h \|_{L^2(S_\delta)}.
\]
These operators allow us to write \eqref{represent} as
\begin{equation}\label{op rep}
\hat{f}(\xi)=(I+B_\alpha M_\alpha)\left[\mathcal{F}[I_\alpha[f]]\right](\xi),
\end{equation}
for almost every $\xi\in S_\delta$, where $I$ is the usual identity operator.

\begin{prop}
For $\alpha,\delta>0$ and $f\in PW_{S_\delta}$, we have
\begin{equation}\label{u bnd}
\| u_\alpha \|_{L^2(S_\delta)}\leq e^{\sqrt{2}\alpha\delta}\| \hat{f} \|_{L^2(S_\delta)}.
\end{equation}
\end{prop}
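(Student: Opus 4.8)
The plan is to read \eqref{u bnd} as a minimal-norm (variational) statement for the interpolant, carried out on the Fourier side with the weight $e^{\alpha|\xi|}$ that is natural for the Poisson kernel. Introduce the quadratic quantity
\[
Q(F)=\int_{\mathbb{R}^2}\bigl|\mathcal{F}[F](\xi)\bigr|^2 e^{\alpha|\xi|}\,d\xi .
\]
For $f\in PW_{S_\delta}$ this is finite, with $Q(f)=\int_{S_\delta}|\hat f(\xi)|^2 e^{\alpha|\xi|}\,d\xi\le e^{\sqrt{2}\alpha\delta}\|\hat f\|_{L^2(S_\delta)}^2$, since $|\xi|\le\sqrt{2}\delta$ on $S_\delta$. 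For the interpolant, write $\mathcal{F}[I_\alpha[f]](\xi)=e^{-\alpha|\xi|}H(\xi)$ with $H(\xi)=\sum_{j}a_j e^{-i\langle\xi,x_j\rangle}$ the $L^2_{loc}$ extension; then $Q(I_\alpha[f])=\int_{\mathbb{R}^2}e^{-\alpha|\xi|}|H(\xi)|^2\,d\xi=2\pi\sum_{j,k}a_j\bar a_k g_\alpha(x_k-x_j)$, which the first Proposition of this section guarantees is finite. Thus $f$, $I_\alpha[f]$, and hence $f-I_\alpha[f]$ all lie in the weighted space $L^2\!\left(e^{\alpha|\xi|}d\xi\right)$ on the Fourier side.

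The heart of the argument is the Pythagorean inequality $Q(I_\alpha[f])\le Q(f)$. First I would establish the cross-term identity
\[
\int_{\mathbb{R}^2}\overline{\mathcal{F}[I_\alpha[f]](\xi)}\bigl(\hat f(\xi)-\mathcal{F}[I_\alpha[f]](\xi)\bigr)e^{\alpha|\xi|}\,d\xi = 2\pi\sum_{j}\bar a_j\bigl(f(x_j)-I_\alpha[f](x_j)\bigr)=0 ,
\]
the last equality being the interpolation condition. To get the middle expression, substitute $\mathcal{F}[I_\alpha[f]]=e^{-\alpha|\cdot|}H$ so the weight $e^{\alpha|\xi|}$ cancels, expand $\overline H=\sum_j\bar a_j e^{i\langle\cdot,x_j\rangle}$, interchange summation and integration, and recognize $\int_{\mathbb{R}^2}\hat f(\xi)e^{i\langle\xi,x_j\rangle}d\xi=2\pi f(x_j)$ and $\int_{\mathbb{R}^2}\mathcal{F}[I_\alpha[f]](\xi)e^{i\langle\xi,x_j\rangle}d\xi=2\pi I_\alpha[f](x_j)$ by Fourier inversion. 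Since $f=(f-I_\alpha[f])+I_\alpha[f]$, the vanishing cross term yields $Q(f)=Q(f-I_\alpha[f])+Q(I_\alpha[f])\ge Q(I_\alpha[f])$.

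Finally I would convert back to $L^2(S_\delta)$ norms. Using the paper's formula $\mathcal{F}[I_\alpha[f]](\xi)=e^{-\alpha|\xi|}u_\alpha(\xi)$ on $S_\delta$, we have $e^{-\alpha|\xi|}|u_\alpha(\xi)|^2=e^{\alpha|\xi|}|\mathcal{F}[I_\alpha[f]](\xi)|^2$ there, so that $\int_{S_\delta}e^{-\alpha|\xi|}|u_\alpha|^2\le Q(I_\alpha[f])$; combining this with $1\le e^{\sqrt2\alpha\delta}e^{-\alpha|\xi|}$ on $S_\delta$ and the estimate for $Q(f)$ gives
\[
\|u_\alpha\|_{L^2(S_\delta)}^2\le e^{\sqrt2\alpha\delta}\int_{S_\delta}e^{-\alpha|\xi|}|u_\alpha|^2\,d\xi\le e^{\sqrt2\alpha\delta}Q(I_\alpha[f])\le e^{\sqrt2\alpha\delta}Q(f)\le e^{2\sqrt2\alpha\delta}\|\hat f\|_{L^2(S_\delta)}^2 ,
\]
and \eqref{u bnd} follows upon taking square roots. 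The main obstacle is the rigorous justification of the cross-term identity, namely the interchange of summation and integration together with the convergence of $H$ in the weighted space; this is exactly the type of estimate already performed in the first Proposition via the operators $A_m$ and dominated convergence, and the remaining weight manipulations are routine. I find this direct Pythagorean route cleaner than invoking reproducing-kernel theory, since the native space of $g_\alpha$ has not been introduced.
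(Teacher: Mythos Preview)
Your argument is correct, but it takes a genuinely different route from the paper. The paper pairs the representation \eqref{represent} with $u_\alpha$ in $L^2(S_\delta)$: each term $\langle A_m^*[e^{-\alpha m|\cdot|}A_m[u_\alpha]],u_\alpha\rangle=\langle e^{-\alpha m|\cdot|}A_m[u_\alpha],A_m[u_\alpha]\rangle$ is nonnegative, so dropping them gives $\langle \hat f,u_\alpha\rangle\ge\langle e^{-\alpha|\cdot|}u_\alpha,u_\alpha\rangle\ge e^{-\sqrt{2}\alpha\delta}\|u_\alpha\|^2$, and Cauchy--Schwarz supplies the upper bound $\langle \hat f,u_\alpha\rangle\le\|\hat f\|\,\|u_\alpha\|$; dividing finishes. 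Your approach instead works globally in the weighted space $L^2(e^{\alpha|\xi|}d\xi)$ and establishes the Pythagorean identity $Q(f)=Q(f-I_\alpha[f])+Q(I_\alpha[f])$, i.e.\ the native-space minimum-norm property of the Poisson interpolant, and then localizes back to $S_\delta$. The two arguments share the same lower bound $\int_{S_\delta}e^{-\alpha|\xi|}|u_\alpha|^2\ge e^{-\sqrt{2}\alpha\delta}\|u_\alpha\|^2$ but differ in the upper bound: the paper uses Cauchy--Schwarz on $\langle\hat f,u_\alpha\rangle$, while you use $Q(I_\alpha[f])\le Q(f)\le e^{\sqrt{2}\alpha\delta}\|\hat f\|^2$. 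The paper's proof is two lines once \eqref{represent} is available; yours is more conceptual, recovers the variational characterization of $I_\alpha[f]$ as a by-product, and does not explicitly invoke \eqref{represent}, though the justification of your cross-term identity relies on the same $A_m$/dominated-convergence estimates from Proposition~1.
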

\begin{proof}
We need only calculate $\langle \hat{f}(\xi), u_\alpha(\xi)   \rangle$, using \eqref{represent} we have
\begin{align*}
\langle \hat{f}(\xi), u_\alpha(\xi)   \rangle &= \langle e^{-\alpha|\xi|}u_\alpha(\xi),u_\alpha(\xi) \rangle +  \sum_{m=2}^{\infty}m^2\left\langle e^{-\alpha m |\xi|}A_{m}[u_\alpha](\xi), A_{m}[u_\alpha](\xi)    \right\rangle \\
&\geq \langle e^{-\alpha|\xi|}u_\alpha(\xi),u_\alpha(\xi) \rangle \geq e^{-\sqrt{2}\alpha\delta} \| u_\alpha \|^2_{L^2(S_\delta)}.
\end{align*}
On the other hand, the Cauchy-Schwarz inequality yields
\[
|\langle \hat{f}(\xi), u_\alpha(\xi)    \rangle|\leq \| \hat{f} \|_{L^2(S_\delta)}\| u_\alpha \|_{L^2(S_\delta)}.
\]
The proposition follows from combining the two inequalities.
\end{proof}

\begin{prop}
For $\delta>0$ and $f\in PW_{S_\pi}$, there exists $A(\delta)>0$ such that
\[
\| \mathcal{F}\left[ I_\alpha[f]    \right] \|_{L^2(S_\delta)} \leq (1+8B^4e^{\alpha\delta(\sqrt{2}-1)})\| \hat{f}  \|_{L^2(S_\delta)}
\]
for all $\alpha\geq A(\delta).$

\end{prop}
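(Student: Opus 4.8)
The plan is to read the bound directly off the operator identity \eqref{op rep} rather than attempting to invert $I+B_\alpha M_\alpha$. One might be tempted to write $\mathcal{F}[I_\alpha[f]]=(I+B_\alpha M_\alpha)^{-1}[\hat f]$ and expand in a Neumann series, but this approach fails: combining \eqref{b bnd} with the bound $\|M_\alpha[h]\|_{L^2(S_\delta)}\le e^{\sqrt 2\alpha\delta}\|h\|_{L^2(S_\delta)}$ gives only $\|B_\alpha M_\alpha\|\le 4B^4 e^{\alpha\delta(\sqrt 2-1)}(1-e^{-\alpha\delta})^{-2}$, and since $\sqrt 2-1>0$ this operator norm blows up as $\alpha\to\infty$, so no such series converges. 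The genuine content of the proposition is that the exponential growth of $M_\alpha$ is harmless here, and the proof should make that transparent.

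The observation that sidesteps the obstacle is that $M_\alpha$ acts on $\mathcal{F}[I_\alpha[f]]$ to return $u_\alpha$ exactly. Writing $G=\mathcal{F}[I_\alpha[f]]$, we have $G(\xi)=e^{-\alpha|\xi|}u_\alpha(\xi)$ by the definition of the Poisson interpolant, so $M_\alpha[G](\xi)=e^{\alpha|\xi|}e^{-\alpha|\xi|}u_\alpha(\xi)=u_\alpha(\xi)$. Substituting $M_\alpha[G]=u_\alpha$ into \eqref{op rep} yields $\hat f = G + B_\alpha[u_\alpha]$, hence $G=\hat f - B_\alpha[u_\alpha]$ and, by the triangle inequality,
\[
\|G\|_{L^2(S_\delta)}\le \|\hat f\|_{L^2(S_\delta)}+\|B_\alpha[u_\alpha]\|_{L^2(S_\delta)}.
\]

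To control the second term I would apply \eqref{b bnd} followed by the estimate \eqref{u bnd} on $u_\alpha$:
\[
\|B_\alpha[u_\alpha]\|_{L^2(S_\delta)}\le \frac{4B^4 e^{-\alpha\delta}}{(1-e^{-\alpha\delta})^2}\,\|u_\alpha\|_{L^2(S_\delta)}\le \frac{4B^4 e^{\alpha\delta(\sqrt 2-1)}}{(1-e^{-\alpha\delta})^2}\,\|\hat f\|_{L^2(S_\delta)}.
\]
This already gives $\|G\|_{L^2(S_\delta)}\le \bigl(1+4B^4 e^{\alpha\delta(\sqrt 2-1)}(1-e^{-\alpha\delta})^{-2}\bigr)\|\hat f\|_{L^2(S_\delta)}$. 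The final step is to absorb the denominator into the constant: choosing $A(\delta)$ so that $e^{-\alpha\delta}\le 1-1/\sqrt 2$ for all $\alpha\ge A(\delta)$, that is $A(\delta)=\delta^{-1}\ln\bigl(\sqrt 2/(\sqrt 2-1)\bigr)$, forces $(1-e^{-\alpha\delta})^2\ge 1/2$ and hence $4B^4(1-e^{-\alpha\delta})^{-2}\le 8B^4$, which produces exactly the stated constant $1+8B^4 e^{\alpha\delta(\sqrt 2-1)}$.

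I expect no serious obstacle once the collapse $M_\alpha[\mathcal{F}[I_\alpha[f]]]=u_\alpha$ is spotted; the only points requiring care are conceptual rather than computational. First, it is precisely this collapse that replaces the divergent Neumann inversion by a single application of the already-established bound \eqref{u bnd}. Second, the threshold $A(\delta)$ is introduced \emph{only} to tame the factor $(1-e^{-\alpha\delta})^{-2}$, not the exponential $e^{\alpha\delta(\sqrt 2-1)}$: the latter is genuinely present in the conclusion and reflects the unavoidable growth of $M_\alpha$, so one should not expect it to disappear.
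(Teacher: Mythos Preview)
Your proof is correct and is exactly the argument the paper intends: the paper's proof merely cites \eqref{b bnd}, \eqref{op rep}, and \eqref{u bnd} and then solves $(1-e^{-\alpha\delta})^{-2}\le 2$ for the threshold $A(\delta)=\delta^{-1}\ln(\sqrt 2/(\sqrt 2-1))$, which is precisely the chain you wrote out once you noticed the collapse $M_\alpha[\mathcal F[I_\alpha[f]]]=u_\alpha$. Your added remarks on why a Neumann-series inversion would fail are accurate and clarify why \eqref{u bnd} is the essential input.
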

\begin{proof}
It is clear from \eqref{b bnd}, \eqref{op rep}, and \eqref{u bnd} that we need only find $\alpha$ so large that
\[
(1-e^{-\alpha\delta})^{-2}\leq 2.
\]
For this, we may take $\alpha\geq \delta^{-1}\ln(\sqrt{2}/(\sqrt{2}-1))$.
\end{proof}
In order to make the following results less cumbersome, we refer to the constant found above as $A_\delta$, that is $A_\delta=\delta^{-1}\ln(\sqrt{2}/(\sqrt{2}-1))$.
We have the following useful corollary.
\begin{cor}
For $\delta>0$, $\alpha\geq A_\delta$, and $f\in PW_{S_\delta}$, the operator $(I+B_\alpha M_\alpha):L^2(S_\delta)\to L^2(S_\delta)$ is invertible, and 
\[
\|(I+B_\alpha M_\alpha)^{-1}\hat{f}  \|_{L^2(S_\delta)}\leq 9B^4e^{\alpha\delta(\sqrt{2}-1)}\| \hat{f} \|_{L^2(S_\delta)}.
\]
\end{cor}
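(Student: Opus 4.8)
The plan is to establish invertibility of $I + B_\alpha M_\alpha$ on $L^2(S_\delta)$ through separate surjectivity and injectivity arguments, and only afterward to read off the norm estimate from the preceding proposition. I expect the main obstacle to be the invertibility itself: by \eqref{b bnd} and the bound on $M_\alpha$ the operator norm of $B_\alpha M_\alpha$ is of order $4B^4 e^{\alpha\delta(\sqrt 2 - 1)}(1-e^{-\alpha\delta})^{-2}$, which grows with $\alpha$ since $\sqrt 2 - 1 > 0$. Thus $I + B_\alpha M_\alpha$ is \emph{not} a small perturbation of the identity, so a Neumann series argument is unavailable and one must argue bijectivity by hand.

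\textbf{Surjectivity.} Given an arbitrary $g \in L^2(S_\delta)$, extend it by zero to obtain $\hat f = g\chi_{S_\delta}$, the Fourier transform of some $f \in PW_{S_\delta}$. The representation \eqref{op rep} then reads $(I + B_\alpha M_\alpha)[\mathcal{F}[I_\alpha[f]]] = \hat f$ a.e. on $S_\delta$, and since the restriction of $\mathcal{F}[I_\alpha[f]]$ to $S_\delta$ equals $e^{-\alpha|\cdot|}u_\alpha \in L^2(S_\delta)$, this exhibits a genuine preimage of $g$ inside $L^2(S_\delta)$. As $g$ was arbitrary, the operator is onto.

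\textbf{Injectivity.} Suppose $(I + B_\alpha M_\alpha)[h] = 0$ and set $g = M_\alpha[h] = e^{\alpha|\cdot|}h$, so that $e^{-\alpha|\cdot|}g + B_\alpha[g] = 0$ on $S_\delta$. Pairing with $g$ and arguing exactly as in the proof of \eqref{u bnd}, the term $\langle e^{-\alpha|\cdot|}g, g\rangle = \int_{S_\delta}e^{-\alpha|\xi|}|g(\xi)|^2\,d\xi$ is at least $e^{-\sqrt 2 \alpha\delta}\|g\|_{L^2(S_\delta)}^2$, while $\langle B_\alpha[g], g\rangle = \sum_{m\geq 2} m^2 \int_{S_\delta} e^{-\alpha m|\xi|}|A_m[g](\xi)|^2\,d\xi \geq 0$; their sum being zero forces $g = 0$, hence $h = 0$. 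The only delicate point here is the termwise pairing against the infinite series defining $B_\alpha[g]$, but this interchange is identical to the one already carried out for \eqref{u bnd} and is controlled by the $L^2(S_\delta)$-convergence guaranteed by \eqref{b bnd}.

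With both properties in hand, $I + B_\alpha M_\alpha$ is a bounded bijection of the Hilbert space $L^2(S_\delta)$, so its inverse is bounded. The surjectivity construction identifies $(I+B_\alpha M_\alpha)^{-1}\hat f$ with $\mathcal{F}[I_\alpha[f]]$, and the preceding proposition already bounds this by $(1 + 8B^4 e^{\alpha\delta(\sqrt 2 - 1)})\|\hat f\|_{L^2(S_\delta)}$ for $\alpha \geq A_\delta$. Finally, since $B \geq 1$ and $e^{\alpha\delta(\sqrt 2 - 1)} \geq 1$, we have $1 \leq B^4 e^{\alpha\delta(\sqrt 2 - 1)}$, whence $1 + 8B^4 e^{\alpha\delta(\sqrt 2 - 1)} \leq 9B^4 e^{\alpha\delta(\sqrt 2 - 1)}$, giving the stated estimate.
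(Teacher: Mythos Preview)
Your proof is correct. The paper states this corollary without proof, treating it as immediate from the preceding proposition, so you have supplied the missing details in the spirit of the paper's own machinery: surjectivity via the representation \eqref{op rep}, the norm bound via Proposition~5, and injectivity via the same positivity observation that underlies the proof of \eqref{u bnd}. One small remark: your injectivity computation in fact shows that $h\mapsto e^{-\alpha|\cdot|}h + B_\alpha[h]$ is a self-adjoint operator bounded below by $e^{-\sqrt{2}\alpha\delta}I$, which already gives its invertibility directly; composing with the obviously invertible $M_\alpha$ then yields invertibility of $I+B_\alpha M_\alpha$ without a separate surjectivity step. Either route is fine.
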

\begin{prop}
For $\delta>0$, $\alpha\geq A_\delta$, and $f\in PW_{S_\delta}$, we have
\[
\| \mathcal{F}\left[ I_\alpha[f]   \right] \|_{L^2(\mathbb{R}^2\setminus S_\delta)} \leq 4B^2e^{\alpha\delta(\sqrt{2}-1)}\| \hat{f}  \|_{L^2(S_\delta)}.
\]
\end{prop}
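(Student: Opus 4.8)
The plan is to compute $\|\mathcal{F}[I_\alpha[f]]\|_{L^2(\mathbb{R}^2\setminus S_\delta)}^2$ directly, exploiting the same annular decomposition of the exterior of $S_\delta$ that underlies \eqref{A bnd} and the earlier propositions. Writing $\mathcal{F}[I_\alpha[f]](\xi)=e^{-\alpha|\xi|}u_\alpha(\xi)$, where $u_\alpha$ is the $L^2_{loc}$ extension $\sum_{j}a_je^{i\langle \xi,x_j\rangle}$, I would first partition $\mathbb{R}^2\setminus S_\delta=\bigcup_{m=2}^{\infty}\left(mS_\delta\setminus(m-1)S_\delta\right)$ and write
\[
\|\mathcal{F}[I_\alpha[f]]\|_{L^2(\mathbb{R}^2\setminus S_\delta)}^2=\sum_{m=2}^{\infty}\int_{mS_\delta\setminus(m-1)S_\delta}e^{-2\alpha|\xi|}|u_\alpha(\xi)|^2\,d\xi.
\]

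On the $m$-th annulus I would perform the substitution $\xi=m\eta$, which carries $mS_\delta\setminus(m-1)S_\delta$ onto $S_\delta\setminus(1-1/m)S_\delta$, produces the Jacobian factor $m^2$, and, by the definition of $A_m$, turns the integrand into $e^{-2\alpha m|\eta|}|A_m[u_\alpha](\eta)|^2$. The key geometric observation---exactly the one used to pass from $e^{-\alpha m|\xi|}$ to $e^{-\alpha\delta(m-1)}$ earlier in this section---is that any point of $S_\delta\setminus(1-1/m)S_\delta$ has a coordinate of modulus at least $(1-1/m)\delta$, so $m|\eta|\geq(m-1)\delta$ and hence $e^{-2\alpha m|\eta|}\leq e^{-2\alpha\delta(m-1)}$ on this region. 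Pulling this bound out and invoking the operator estimate \eqref{A bnd}, namely $\|A_m[u_\alpha]\|_{L^2(S_\delta)}^2\leq 4(m-1)m^{-2}B^4\|u_\alpha\|_{L^2(S_\delta)}^2$, the $m$-th term is at most $4(m-1)B^4e^{-2\alpha\delta(m-1)}\|u_\alpha\|_{L^2(S_\delta)}^2$.

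Summing the resulting series, $\sum_{m=2}^{\infty}(m-1)e^{-2\alpha\delta(m-1)}=e^{-2\alpha\delta}(1-e^{-2\alpha\delta})^{-2}$, and then replacing $\|u_\alpha\|_{L^2(S_\delta)}^2$ by $e^{2\sqrt{2}\alpha\delta}\|\hat f\|_{L^2(S_\delta)}^2$ via \eqref{u bnd}, yields
\[
\|\mathcal{F}[I_\alpha[f]]\|_{L^2(\mathbb{R}^2\setminus S_\delta)}^2\leq 4B^4\,\frac{e^{2\alpha\delta(\sqrt{2}-1)}}{(1-e^{-2\alpha\delta})^2}\,\|\hat f\|_{L^2(S_\delta)}^2.
\]
For $\alpha\geq A_\delta$ one has $(1-e^{-2\alpha\delta})^{-2}\leq(1-e^{-\alpha\delta})^{-2}\leq 2$, which is precisely the defining property of $A_\delta$; taking square roots and using $\sqrt{8}=2\sqrt{2}\leq 4$ then gives the stated bound $4B^2e^{\alpha\delta(\sqrt{2}-1)}\|\hat f\|_{L^2(S_\delta)}$.

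I do not expect a genuine obstacle here: the argument is essentially a one-sided, squared analogue of the estimates already carried out for $A_m$ and $B_\alpha$. The only point demanding care is the bookkeeping of the factor of two in the exponent---the integrand now carries $e^{-2\alpha|\xi|}$ rather than $e^{-\alpha|\xi|}$---which propagates through the geometric series as $(1-e^{-2\alpha\delta})^{-2}$ and must be reconciled with the $A_\delta$ threshold. Fortunately the bound on $(1-e^{-2\alpha\delta})^{-2}$ is even easier to meet than the one on $(1-e^{-\alpha\delta})^{-2}$, so no new restriction on $\alpha$ beyond $\alpha\geq A_\delta$ is needed.
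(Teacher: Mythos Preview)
Your argument is correct and follows essentially the same route as the paper: the same annular decomposition $\mathbb{R}^2\setminus S_\delta=\bigcup_{m\geq 2}(mS_\delta\setminus(m-1)S_\delta)$, the same change of variables reducing to the $A_m$ estimate \eqref{A bnd}, the same geometric series, and the same application of \eqref{u bnd}. Your bookkeeping of the factor of $2$ in the exponent (yielding $e^{-2\alpha\delta(m-1)}$ and $(1-e^{-2\alpha\delta})^{-2}$) is in fact slightly more careful than what appears in the paper, but the final constant comes out the same.
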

\begin{proof}
This is a calculation involving \eqref{A bnd}.
\begin{align*}
\| \mathcal{F}\left[ I_\alpha[f]   \right] \|^2_{L^2(\mathbb{R}^2\setminus S_\delta)} &= \sum_{m=2}^{\infty}\int_{mS_\delta\setminus (m-1)S_\delta}e^{-2\alpha|\xi|}|u_\alpha(\xi)|^2d\xi\\
&= \sum_{m=2}^{\infty}m^2\int_{S_\delta}e^{-2\alpha|\xi|}\left|A_m[u_\alpha](\xi) \right|^2d\xi\\
&\leq 4B^4 \sum_{m=2}^{\infty}(m-1)e^{-\alpha\delta(m-1)}\| u_\alpha \|_{L^2(S_\delta)}^{2}\\
&\leq 16 B^4 e^{2\alpha\delta(\sqrt{2}-1)}\| \hat{f}  \|_{L^2(S_\delta)}^{2}
\end{align*}
The last inequality comes from summing the geometric series and applying \eqref{u bnd}.  This is the desired result once we take square roots.
\end{proof}
We sum up these results in the following theorem.
\begin{thm}
For $\delta>0$, $\alpha\geq A_\delta$, and $f\in PW_{S_\delta}$, the Poisson interpolation operator $I_\alpha: PW_{S_\delta}\to L^2(\mathbb{R}^2)$ is bounded.  We have
\[
\| I_\alpha[f] \|_{L^2(\mathbb{R}^2)}\leq 13 B^4 e^{\alpha\delta(\sqrt{2}-1)}\| f \|_{L^2(\mathbb{R}^2)}.
\]
\end{thm}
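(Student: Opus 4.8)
The plan is to pass to the Fourier side and exploit that, under the convention fixed in the paper (which for $n=2$ is the symmetric convention, as is already used in the proof of the Lemma), the Fourier transform is an isometry of $L^2(\mathbb{R}^2)$. Thus $\| I_\alpha[f] \|_{L^2(\mathbb{R}^2)} = \| \mathcal{F}[I_\alpha[f]] \|_{L^2(\mathbb{R}^2)}$, and the whole estimate reduces to controlling the transform. First I would split the frequency domain as $\mathbb{R}^2 = S_\delta \cup (\mathbb{R}^2\setminus S_\delta)$ and write
\[
\| \mathcal{F}[I_\alpha[f]] \|_{L^2(\mathbb{R}^2)}^2 = \| \mathcal{F}[I_\alpha[f]] \|_{L^2(S_\delta)}^2 + \| \mathcal{F}[I_\alpha[f]] \|_{L^2(\mathbb{R}^2\setminus S_\delta)}^2 .
\]
This is exactly the decomposition for which the two preceding propositions were designed: one bounds $\| \mathcal{F}[I_\alpha[f]] \|_{L^2(S_\delta)}$ by $(1+8B^4e^{\alpha\delta(\sqrt{2}-1)})\| \hat f \|_{L^2(S_\delta)}$, and the other bounds $\| \mathcal{F}[I_\alpha[f]] \|_{L^2(\mathbb{R}^2\setminus S_\delta)}$ by $4B^2e^{\alpha\delta(\sqrt{2}-1)}\| \hat f \|_{L^2(S_\delta)}$, both valid for $\alpha\ge A_\delta$.

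Next, since $f\in PW_{S_\delta}$ its transform is supported in $S_\delta$, so that $\| \hat f \|_{L^2(S_\delta)} = \| \hat f \|_{L^2(\mathbb{R}^2)} = \| f \|_{L^2(\mathbb{R}^2)}$ by Plancherel. Applying the elementary inequality $\sqrt{a^2+b^2}\le a+b$ for nonnegative $a,b$ to the decomposition above, I would combine the two propositions into
\[
\| \mathcal{F}[I_\alpha[f]] \|_{L^2(\mathbb{R}^2)} \le \bigl(1 + 8B^4e^{\alpha\delta(\sqrt{2}-1)} + 4B^2e^{\alpha\delta(\sqrt{2}-1)}\bigr)\| f \|_{L^2(\mathbb{R}^2)} .
\]

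Finally, I would collapse the three-term constant into the single announced one. Writing $E = e^{\alpha\delta(\sqrt{2}-1)}$ and recalling that $E\ge 1$ (since $\alpha,\delta>0$ and $\sqrt{2}>1$) together with the fact that the Riesz basis constant obeys $B\ge 1$, each summand is dominated by a multiple of $B^4E$: one has $1\le B^4E$ and $4B^2E\le 4B^4E$ because $B^2\le B^4$, while $8B^4E$ is already in the desired form. Adding these gives a total of at most $(1+8+4)B^4E = 13B^4E$, which yields $\| I_\alpha[f] \|_{L^2(\mathbb{R}^2)}\le 13B^4e^{\alpha\delta(\sqrt{2}-1)}\| f \|_{L^2(\mathbb{R}^2)}$.

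I do not expect a genuine obstacle, as this theorem is a consolidation of the two preceding propositions rather than a new argument. The only two points requiring care are confirming that the chosen normalization genuinely makes the Fourier transform an $L^2(\mathbb{R}^2)$-isometry (so both the transfer of the $\mathbb{R}^2$ norm and the identity $\| \hat f \|_{L^2(S_\delta)} = \| f \|_{L^2(\mathbb{R}^2)}$ are exact), and the monotonicity bookkeeping $B\ge 1$, $E\ge 1$ that permits the constant to collapse precisely to $13B^4$; this last simplification is the only place where one must be deliberate so as to land on the stated clean form.
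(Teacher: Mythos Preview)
Your proposal is correct and follows exactly the approach of the paper: the theorem is simply the combination of the two preceding propositions via Plancherel, with the $S_\delta$ and $\mathbb{R}^2\setminus S_\delta$ pieces added and the constants collapsed using $B\ge 1$ and $e^{\alpha\delta(\sqrt{2}-1)}\ge 1$. The paper's own proof is a single sentence to this effect; your write-up merely makes the bookkeeping for the constant $13B^4$ explicit.
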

\begin{proof}
Since the Fourier transform is an isometry, we need only combine the previous two propositions.
\end{proof}


\section{Recovery Results}

Now that we know the Poisson interpolation operator is bounded, we turn our attention to trying to recover a given function $f\in PW_{S_\delta}$.  Unfortunately, we cannot recover all $f\in PW_{S_\delta}$, however, if $\hat f$ is concentrated enough, recovery is possible.  With this in mind, for $0<\beta < \delta$, we let $B_\beta=\{\xi\in \mathbb{R}^2 : |\xi|\leq \beta  \}$ be the ball of radius $\beta$ in $\mathbb{R}^2$.  Our results will focus on $f\in PW_{B_\beta}$.  Our first theorem deals with the $L^2(\mathbb{R}^2)$ limit.


\begin{thm}
Let $\delta>0$ be given.  If  $0< \beta <(3-\sqrt{8})\delta $ and $f\in PW_{B_\beta}$, then 
\[
\lim_{\alpha\to\infty}\| f- I_\alpha [f]   \|_{L^2(\mathbb{R}^2)} =0
\]
\end{thm}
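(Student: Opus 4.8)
The plan is to work entirely on the Fourier side, where $I_\alpha$ acts transparently, and to split the error by frequency. Since $\mathcal F$ is an isometry of $L^2(\mathbb R^2)$ and $\hat f$ is supported in $B_\beta\subset S_\delta$ (recall $\beta<\delta$), Plancherel gives
\[
\|f-I_\alpha[f]\|_{L^2(\mathbb R^2)}^2=\|\hat f-\mathcal F[I_\alpha[f]]\|_{L^2(S_\delta)}^2+\|\mathcal F[I_\alpha[f]]\|_{L^2(\mathbb R^2\setminus S_\delta)}^2 .
\]
On $S_\delta$ the representation \eqref{op rep} yields $\hat f-\mathcal F[I_\alpha[f]]=B_\alpha[u_\alpha]$, so \eqref{b bnd} bounds the first summand by $Ce^{-\alpha\delta}\|u_\alpha\|_{L^2(S_\delta)}$ once $\alpha\ge A_\delta$. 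The second summand is exactly the quantity estimated in the proposition bounding $\|\mathcal F[I_\alpha[f]]\|_{L^2(\mathbb R^2\setminus S_\delta)}$; rerunning that computation (which rests on \eqref{A bnd}) while keeping $\|u_\alpha\|_{L^2(S_\delta)}$ in place of the bound \eqref{u bnd} shows it too is at most $Ce^{-\alpha\delta}\|u_\alpha\|_{L^2(S_\delta)}$. Hence the whole problem collapses to the single estimate
\[
\|f-I_\alpha[f]\|_{L^2(\mathbb R^2)}\le Ce^{-\alpha\delta}\|u_\alpha\|_{L^2(S_\delta)},
\]
and everything turns on how fast $\|u_\alpha\|_{L^2(S_\delta)}$ is allowed to grow.

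The hard part is precisely controlling $\|u_\alpha\|_{L^2(S_\delta)}$: the naive bound \eqref{u bnd} gives only $e^{\sqrt2\alpha\delta}\|\hat f\|$, for which $e^{-\alpha\delta}\|u_\alpha\|$ blows up, and one cannot repair this with a Neumann series for $(I+B_\alpha M_\alpha)$ since $\|B_\alpha M_\alpha\|$ exceeds $1$ for large $\alpha$. The device that rescues the estimate is the concentration of $\hat f$ in $B_\beta$, fed into the weighted identity obtained by pairing \eqref{represent} with $u_\alpha$ over $S_\delta$ and rewriting each dilated term as an integral over the annulus $mS_\delta\setminus(m-1)S_\delta$, namely
\[
\int_{\mathbb R^2}e^{-\alpha|\xi|}|u_\alpha(\xi)|^2\,d\xi=\langle\hat f,u_\alpha\rangle_{L^2(S_\delta)}=\int_{B_\beta}\hat f\,\overline{u_\alpha}\,d\xi .
\]
Bounding the left-hand side below by its integral over $B_\beta$ (where $e^{-\alpha|\xi|}\ge e^{-\alpha\beta}$) and the right-hand side above by Cauchy--Schwarz gives $\|u_\alpha\|_{L^2(B_\beta)}\le e^{\alpha\beta}\|\hat f\|$, so the common value above is at most $e^{\alpha\beta}\|\hat f\|^2$. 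Bounding the left-hand side instead below by its integral over $S_\delta\setminus B_\beta$ (where $e^{-\alpha|\xi|}\ge e^{-\sqrt2\alpha\delta}$) then yields $\|u_\alpha\|_{L^2(S_\delta\setminus B_\beta)}^2\le e^{(\sqrt2\delta+\beta)\alpha}\|\hat f\|^2$. Adding the two pieces gives, for large $\alpha$,
\[
\|u_\alpha\|_{L^2(S_\delta)}\le Ce^{(\sqrt2\delta+\beta)\alpha/2}\|\hat f\|_{L^2(\mathbb R^2)} .
\]

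Substituting this into the collapsed estimate gives
\[
\|f-I_\alpha[f]\|_{L^2(\mathbb R^2)}\le Ce^{(\beta-(2-\sqrt2)\delta)\alpha/2}\|\hat f\|_{L^2(\mathbb R^2)},
\]
and the exponent is negative exactly when $\beta<(2-\sqrt2)\delta$. Since $(3-\sqrt8)\delta=(\sqrt2-1)^2\delta<(2-\sqrt2)\delta$, the hypothesis $\beta<(3-\sqrt8)\delta$ is comfortably sufficient, and letting $\alpha\to\infty$ finishes the proof. I expect the single genuine obstacle to be the weighted-identity step: driving the growth rate of $\|u_\alpha\|$ below $e^{\alpha\delta}$ is what forces a quantitative restriction on $\beta$, and it is the only place where the band-limiting ball $B_\beta$, rather than the full square $S_\delta$, enters the estimate.
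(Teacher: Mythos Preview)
Your argument is correct, and it takes a genuinely different route from the paper's. Both proofs split the error as $\|\hat f-\mathcal F[I_\alpha[f]]\|_{L^2(S_\delta)}+\|\mathcal F[I_\alpha[f]]\|_{L^2(\mathbb R^2\setminus S_\delta)}$, but they diverge in how the support constraint $\operatorname{supp}\hat f\subset B_\beta$ is exploited. The paper treats the $S_\delta$ piece via the operator identity $\hat f-\mathcal F[I_\alpha[f]]=(I+B_\alpha M_\alpha)^{-1}B_\alpha M_\alpha[\hat f]$, invokes the inverse bound of Corollary~1, and uses the concentration of $\hat f$ only through the easy estimate $\|M_\alpha[\hat f]\|\le e^{\alpha\beta}\|\hat f\|$; the $\mathbb R^2\setminus S_\delta$ piece is then bootstrapped from the first. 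You instead reduce both pieces to $Ce^{-\alpha\delta}\|u_\alpha\|_{L^2(S_\delta)}$ and put all the work into the single estimate on $\|u_\alpha\|$, obtained by combining the positive weighted identity $\int_{\mathbb R^2}e^{-\alpha|\xi|}|u_\alpha|^2=\langle\hat f,u_\alpha\rangle_{L^2(S_\delta)}=\int_{B_\beta}\hat f\,\overline{u_\alpha}$ with Cauchy--Schwarz restricted to $B_\beta$. This is not only more elementary (it bypasses Corollary~1 entirely) but quantitatively sharper: your bound $\|u_\alpha\|_{L^2(S_\delta)}\le Ce^{(\sqrt2\delta+\beta)\alpha/2}\|\hat f\|$ yields convergence for all $\beta<(2-\sqrt2)\delta$, strictly containing the paper's range $\beta<(3-\sqrt8)\delta=(\sqrt2-1)^2\delta$. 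What the paper's route buys is that the $S_\delta$ estimate is obtained without first controlling $\|u_\alpha\|$, so the two pieces are handled more independently; what your route buys is a cleaner structure and a better constant in the threshold for $\beta$.
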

\begin{proof}
Before beginning, we note that we must take $\alpha\geq A_\delta$, but this poses no problem since $\alpha\to \infty$.  From the Fourier isometry, it is enough to consider $\| \hat{f}-\mathcal{F}\left[ I_\alpha [f]  \right]  \|_{L^2(\mathbb{R}^2)} = \| \hat{f}-\mathcal{F}\left[ I_\alpha [f]  \right]  \|_{L^2(S_\delta)}+ \| \mathcal{F}\left[ I_\alpha [f]  \right]  \|_{L^2(\mathbb{R}^2\setminus S_\delta)}$.  We estimate these norms separately.
We begin with $L^2(S_\delta)$ and use Corollary 1 as well as \eqref{b bnd}:

\begin{align}
\nonumber \| \hat{f}- \mathcal{F}\left[ I_\alpha [f]  \right]  \|_{L^2(S_\delta)} 
 = & \| \left( I-(I+B^\alpha M_\alpha)^{-1} \right) [\hat{f}]\|_{L^2(S_\delta)}\\
\nonumber = & \| \left( I+ B_\alpha M_\alpha  \right)^{-1}B_\alpha M_\alpha [\hat{f}]   \|_{L^2(S_\delta)}\\
\nonumber  \leq& 9B^4e^{\alpha\delta(\sqrt{2}-1)}\| B_\alpha M_\alpha [\hat{f}]  \|_{L^2(S_\delta)} \\
\nonumber \leq & 72 B^8 e^{\alpha\delta(\sqrt{2}-2)}\| M_\alpha[\hat f] \|_{L^2(S_\delta)}\\
  \leq & 72 B^8 e^{\alpha(\beta + \delta(\sqrt{2}-2))}\|\hat f  \|_{L^2(S_\delta)} \label{BND1}.
\end{align}
This is the desired estimate for $L^2(S_\delta)$.  Notice that the limit tends to 0 since $2-\sqrt{2}< 3-\sqrt{8}$.  For $L^2(\mathbb{R}^2\setminus S_\delta)$, we have
\begin{align}
\nonumber&\| \mathcal{F}\left[ I_\alpha [f]  \right]  \|^{2}_{L^2(\mathbb{R}^2\setminus S_\delta)} \leq \sum_{m=2}^{\infty}m^2e^{-2\alpha\delta(m-1)}\| A_m[u_\alpha]  \|^{2}_{L^2(S_\delta)} \\
\nonumber&\leq 8 B^4 e^{-2\alpha\delta}\| u_\alpha \|^2_{L^2(S_\delta)}  \\
\nonumber&\leq 8 B^4 e^{-2\alpha\delta}\| M_\alpha\left[ \mathcal{F}[I_\alpha[f]]   \right] \|^2_{L^2(S_\delta)}\\
\nonumber& \leq 8 B^4 e^{-2\alpha\delta} \left( e^{\sqrt{2}\alpha\delta}\| \mathcal{F}[I_\alpha[f]] - \hat f  \|_{L^2(S_\delta)} +  e^{\alpha\beta}\| \hat f \|_{L^2(S_\delta)}   \right)^2\\
&\leq 8 B^4 e^{-2\alpha\delta} \left( 73 B^8 e^{\alpha(\beta + \delta(\sqrt{8}-2))} \right)^2 \| \hat f  \|_{L^2(S_\delta)}^{2}.\label{BND2}
\end{align}
This means that
\[
\| \mathcal{F}\left[ I_\alpha [f]  \right]  \|_{L^2(\mathbb{R}^2\setminus S_\delta)} \leq 219 B^{10} e^{\alpha(\beta + \delta(\sqrt{8}-3))} \| \hat f  \|_{L^2(S_\delta)},
\]
so the limit tends to 0.  Since both portions tend to 0, the theorem is proved.
\end{proof}

We may now prove the following result about uniform convergence.

\begin{thm}
Let $\delta>0$ be given.  If  $0< \beta <(3-\sqrt{8})\delta $ and $f\in PW_{B_\beta}$, then 
\[
\lim_{\alpha\to\infty}\left| f(x)- I_\alpha [f] (x) \right| =0
\]
uniformly on $\mathbb{R}^2$.
\end{thm}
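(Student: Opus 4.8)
The plan is to derive uniform convergence from the $L^2$ convergence just proved by routing through the $L^1$ norm of the Fourier transform. Because $n=2$, the paper's normalization $(2\pi)^{-1}$ is the symmetric one, so Fourier inversion yields the elementary bound $\sup_{x\in\mathbb{R}^2}|g(x)|\leq (2\pi)^{-1}\|\hat{g}\|_{L^1(\mathbb{R}^2)}$ for any $g$ whose transform lies in $L^1(\mathbb{R}^2)$. Applying this to $g=f-I_\alpha[f]$ is legitimate: $\hat{f}$ is supported on the bounded set $B_\beta$ and hence lies in $L^1$, while we have already shown $\mathcal{F}[I_\alpha[f]]\in L^1(\mathbb{R}^2)$ in the previous section. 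It therefore suffices to prove $\|\hat{f}-\mathcal{F}[I_\alpha[f]]\|_{L^1(\mathbb{R}^2)}\to 0$, and I would split this as a sum of the $L^1(S_\delta)$ and $L^1(\mathbb{R}^2\setminus S_\delta)$ norms, estimating the two pieces separately exactly as in the proof of the preceding theorem.

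On $S_\delta$ the estimate is immediate: the set has finite measure $4\delta^2$, so Cauchy--Schwarz gives $\|\hat{f}-\mathcal{F}[I_\alpha[f]]\|_{L^1(S_\delta)}\leq 2\delta\,\|\hat{f}-\mathcal{F}[I_\alpha[f]]\|_{L^2(S_\delta)}$, and the remaining factor is controlled by \eqref{BND1}, which decays like $e^{\alpha(\beta+\delta(\sqrt{2}-2))}$. Since $\beta<(3-\sqrt{8})\delta$ forces $\beta+\delta(\sqrt{2}-2)<(1-\sqrt{2})\delta<0$, this piece tends to $0$.

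The complement is the part that requires care, since $\mathbb{R}^2\setminus S_\delta$ has infinite measure and a naive application of Cauchy--Schwarz is unavailable. Here I would decompose $\mathbb{R}^2\setminus S_\delta$ into the annuli $mS_\delta\setminus(m-1)S_\delta$ for $m\geq 2$, and on each annulus use the substitution $\xi=m\eta$ together with the operator $A_m$, writing the integral of $e^{-\alpha|\xi|}|u_\alpha(\xi)|$ as $m^2\int_{S_\delta\setminus(1-1/m)S_\delta}e^{-\alpha m|\eta|}|A_m[u_\alpha](\eta)|\,d\eta$. On this domain $|\eta|\geq\frac{m-1}{m}\delta$, so $e^{-\alpha m|\eta|}\leq e^{-\alpha\delta(m-1)}$; applying Cauchy--Schwarz over $S_\delta$ and the operator bound \eqref{A bnd} then produces a factor $m\sqrt{m-1}\,e^{-\alpha\delta(m-1)}\|u_\alpha\|_{L^2(S_\delta)}$ on each annulus, and the series $\sum_{m\geq 2}m\sqrt{m-1}\,e^{-\alpha\delta(m-1)}$ is summable and bounded by a constant multiple of $e^{-\alpha\delta}$ once $\alpha\geq A_\delta$.

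The crucial step, and the one I expect to be the main obstacle, is that $\|u_\alpha\|_{L^2(S_\delta)}$ must not be estimated directly through \eqref{u bnd}, which would only supply the divergent factor $e^{\sqrt{2}\alpha\delta}$. Instead, exactly as in the chain \eqref{BND2}, I would write $u_\alpha=M_\alpha[\mathcal{F}[I_\alpha[f]]]$ and split $M_\alpha[\mathcal{F}[I_\alpha[f]]]=M_\alpha[\mathcal{F}[I_\alpha[f]]-\hat{f}]+M_\alpha[\hat{f}]$. The first term is bounded by $e^{\sqrt{2}\alpha\delta}$ times the already-controlled $L^2(S_\delta)$ error \eqref{BND1}, while the second exploits that $\hat{f}$ is supported in $B_\beta$, so $M_\alpha$ amplifies it by only $e^{\alpha\beta}$ rather than $e^{\sqrt{2}\alpha\delta}$; together these give $\|u_\alpha\|_{L^2(S_\delta)}\lesssim B^8 e^{\alpha(\beta+\delta(\sqrt{8}-2))}\|\hat{f}\|_{L^2(S_\delta)}$. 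Combining this with the $e^{-\alpha\delta}$ extracted from the series yields a complement bound decaying like $e^{\alpha(\beta+\delta(\sqrt{8}-3))}$, which tends to $0$ precisely when $\beta<(3-\sqrt{8})\delta$. Since both pieces vanish as $\alpha\to\infty$, so does the supremum, and the argument reuses essentially all of the quantities already estimated in the $L^2$ proof.
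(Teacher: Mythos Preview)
Your proposal is correct and follows essentially the same route as the paper: pass to the $L^1$ norm of the Fourier difference via Fourier inversion, handle $S_\delta$ by Cauchy--Schwarz and \eqref{BND1}, decompose the complement into the annuli $mS_\delta\setminus(m-1)S_\delta$, and---crucially---bound $\|u_\alpha\|_{L^2(S_\delta)}$ through the splitting $M_\alpha[\mathcal{F}[I_\alpha[f]]-\hat f]+M_\alpha[\hat f]$ rather than the crude estimate \eqref{u bnd}. The only cosmetic difference is that the paper writes the complement contribution as $\sum_m m^2\int_{S_\delta}|A_m^*[e^{-\alpha m|\cdot|}A_m[u_\alpha]]|$ (so that both $A_m$ and $A_m^*$ appear and the summand carries a factor $(m-1)$), whereas you work directly with $A_m$ and obtain $m\sqrt{m-1}$; either series is dominated by a constant times $e^{-\alpha\delta}$ once $\alpha\geq A_\delta$, so the conclusions coincide.
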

\begin{proof}
We use the Fourier inversion theorem and the Cauchy-Schwarz inequality.
\begin{align*}
&2\pi \left| f(x)- I_\alpha [f] (x) \right| = \left| \int_{\mathbb{R}^2}\hat{f}(\xi) e^{i\langle \xi,x  \rangle }d\xi   - \int_{\mathbb{R}^2}\mathcal{F}[I_\alpha[f]](\xi) e^{i\langle \xi,x  \rangle }d\xi \right|\\
\leq &  \int_{S_\delta} \left| \hat{f}(\xi)- \mathcal{F}[I_\alpha[f]](\xi)   \right|d\xi + \sum_{m=2}^{\infty}m^2 \int_{S_\delta} \left|A^*_{m}\left[e^{-\alpha m |\cdot|}A_m[u_\alpha](\cdot)  \right](\xi)  \right|d\xi \\
\leq & 2\delta \| \hat{f}- \mathcal{F}[I_\alpha[f]] \|_{L^2(S_\delta)} + \delta 8B^4e^{-\alpha\delta}\| u_\alpha \|_{L^2(S_\delta)}\\
\leq & 2\delta\left(   72 B^8 e^{\alpha(\beta +\delta(\sqrt{2}-2)   )}    \|\hat f\|_{L^2(S_\delta)} \right.\\
& \quad + 576 B^{12}e^{\alpha( \beta+ \delta(\sqrt{8}-3)   )}   \|\hat f\|_{L^2(S_\delta)}  
 +       8B^4e^{\alpha(\beta-\delta)}    \|\hat f\|_{L^2(S_\delta)}    \left.\right)
\end{align*}
Here we have used \eqref{BND1} and an argument similar to the one used to find \eqref{BND2} to arrive at the last inequality.  These three terms all tend to 0 provided that $0<\beta < (3-\sqrt{8})\delta$.  Since none of the estimates depend on $x\in\mathbb{R}^2$, the convergence is uniform.
\end{proof}


\section{Conclusions}

The argument here may be amended to include zonoids in higher dimensions as is done in \cite{BSS} for the Gaussian interpolation operator.  In general one obtains a similar result for zonoids with fewer faces than required by the Gaussian.  This is advantageous, but especially so in $\mathbb{R}^2$ where we can use squares.  Since Kadec's theorem has been extended to cubes in higher dimensions, we have an easy criterion which allows us to find a CIS.  We present the following example.
\begin{ex}
Suppose that $\{x_j: j\in\mathbb{Z}  \}$ satisfies $|x_j-j|<1/20$ for all $j\in\mathbb{Z}^2$, and that $f\in PW_{B_{\pi/6}}$, then the Poisson interpolation operator $I_\alpha [f](x)$ converges to $f(x)$ in $L^2(\mathbb{R}^2)$ and uniformly on $\mathbb{R}^2$.   
\end{ex}
Unfortunately, if $n>2$, CISs for cubes cannot be used in the procedure for the Poisson interpolation operator.  What seems necessary to be able to use a CIS for a cube is slower decay.  The trade-off is that it's difficult to find the right speed and the kernel $g(x)$ becomes complicated, nevertheless, we make the following conjecture.

\begin{conj}
Let $n\in \mathbb{N}$, there exists $\omega(n)>0$ such that the interpolation operator $I_\alpha$ associated to 
\[
g_\alpha(x)=\int_{\mathbb{R}^n}e^{-\alpha|\xi|^{\omega}}e^{i\langle \xi, x  \rangle}d\xi
\]
admits the use of CISs for cubes in $\mathbb{R}^n$ to recover functions as in Theorems 2 and 3.
\end{conj}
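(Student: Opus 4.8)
The plan is to re-run the entire apparatus of Sections 3 and 4 verbatim in $\mathbb{R}^n$, with the planar Poisson kernel replaced by the kernel whose Fourier transform is $\widehat{g_\alpha}(\xi)=e^{-\alpha|\xi|^\omega}$, treating $\omega$ and $n$ as free parameters and tracking exactly how they enter the two geometric constants that drive every estimate. The first is the maximal value of $|\xi|$ on the cube $S_\delta=[-\delta,\delta]^n$, namely $\max_{\xi\in S_\delta}|\xi|=\sqrt{n}\,\delta$, which controls $M_\alpha$; the second is the minimal value of $|\xi|$ on the annular cube $S_\delta\setminus(1-1/m)S_\delta$, namely $(1-1/m)\delta$, which after the scaling built into $A_m$ produces the decay controlling $B_\alpha$. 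With the exponent $\omega$ in place these become $\|M_\alpha\|\le e^{\alpha n^{\omega/2}\delta^\omega}$ and, on the $m$-th shell, the pointwise bound $e^{-\alpha m^\omega|\eta|^\omega}\le e^{-\alpha\delta^\omega(m-1)^\omega}$ (since $|\eta|\ge\frac{m-1}{m}\delta$ there). The whole proof then reduces to checking that these two competing exponential rates leave room for a positive choice of $\beta$.

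Next I would establish the $n$-dimensional analogues of the structural estimates. The covering count ``$4(m-1)$ copies of $S_\delta$ cover $mS_\delta\setminus(m-1)S_\delta$'' becomes a count of order $m^{n-1}$, the shell being a surface layer with $m^n-(m-1)^n\le n\,m^{n-1}$, so that \eqref{A bnd} generalizes to $\|A_m[h]\|^2\le C_n m^{-1}B^4\|h\|^2$ once the $m^{-n}$ Jacobian cancels against the $m^n$ copies-weight. Summing $m^{n-1}e^{-\alpha\delta^\omega(m-1)^\omega}$ then yields \eqref{b bnd} in the form $\|B_\alpha[h]\|\le C_n B^4 e^{-\alpha\delta^\omega}\|h\|$. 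Solvability of the interpolation problem requires only $\widehat{g_\alpha}\ge0$, which holds for every $\omega\in(0,2]$ because $e^{-\alpha|\xi|^\omega}$ is then the characteristic function of a symmetric stable law; the lower bound in the first Proposition becomes $e^{-\alpha n^{\omega/2}\delta^\omega}B^{-2}\sum_j|a_j|^2>0$. All membership claims ($I_\alpha[f]\in L^2\cap C$) are argued on the Fourier side exactly as in Proposition 2, so the heavy spatial tail of $g_\alpha$ for $\omega<2$ is irrelevant.

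With these in hand the recovery estimates go through unchanged in form. For $f\in PW_{B_\beta}$ one has $\|M_\alpha[\widehat f\,]\|_{L^2(S_\delta)}\le e^{\alpha\beta^\omega}\|\widehat f\,\|_{L^2(S_\delta)}$, and propagating this through Corollary 1 gives an interior bound of order $e^{\alpha(\beta^\omega+\delta^\omega(n^{\omega/2}-2))}$ and an exterior bound of order $e^{\alpha(\beta^\omega+\delta^\omega(2n^{\omega/2}-3))}$; setting $n=2,\omega=1$ recovers \eqref{BND1} and \eqref{BND2} and the exponents $\sqrt{2}-2$ and $\sqrt{8}-3$. The exterior estimate is binding, and it tends to $0$ precisely when $(\beta/\delta)^\omega<3-2n^{\omega/2}$. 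This inequality has a positive solution in $\beta$ if and only if $n^{\omega/2}<3/2$, i.e. $\omega<\log_n(9/4)=\ln(9/4)/\ln n$; this is why $\omega=1$ works for $n=2$, where $\log_2(9/4)>1$, but fails for $n\ge 3$. Taking $\omega(n)=\log_n(3/2)$ gives $n^{\omega/2}=\sqrt{3/2}$ and the nonempty range $0<\beta<\delta\,(3-\sqrt{6})^{1/\omega}$, and the uniform statement follows as in Theorem 3 with the Cauchy--Schwarz factor $|S_\delta|^{1/2}=(2\delta)^{n/2}$ in place of $2\delta$. This proves the conjecture with an explicit admissible exponent and recovers the planar thresholds.

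The main obstacle is getting the power of $m$ in the $A_m$ estimate exactly right: it must be the surface growth $m^{n-1}$, not the volume $m^n$, so that the $m^n$ weight in $B_\alpha$ and the $m^{-n}$ Jacobian leave behind only $m^{n-1}$ and hence $\|B_\alpha\|\sim e^{-\alpha\delta^\omega}$; an extra power of $m$ here would shift the threshold and could close the window for $\beta$. The two secondary subtleties are confirming that $\sum_{m\ge2} m^{n-1}e^{-\alpha\delta^\omega(m-1)^\omega}$ is dominated by its $m=2$ term even when $\omega$ is small, where $(m-1)^\omega$ grows slowly, which forces $\alpha$ beyond a threshold $A_{\delta,n,\omega}$; and keeping $\omega$ inside the stable regime $(0,2]$ so the Gram matrix stays positive definite, which is automatic here since $\omega<\log_n(9/4)\le\log_2(9/4)<2$.
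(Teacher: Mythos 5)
Your proposal is correct and is essentially the argument the paper itself intends: the paper offers no proof of the conjecture beyond asserting that ``the same argument'' as Sections 3--4 works, and you carry out exactly that program, replacing the kernel by $\widehat{g_\alpha}(\xi)=e^{-\alpha|\xi|^{\omega}}$ and tracking the two controlling quantities $\max_{\xi\in S_\delta}|\xi|=\sqrt{n}\,\delta$ (hence $\|M_\alpha\|\le e^{\alpha n^{\omega/2}\delta^{\omega}}$) and the $O(m^{n-1})$ shell-covering count (hence $\|B_\alpha\|\lesssim e^{-\alpha\delta^{\omega}}$). Your bookkeeping checks out against the paper at both anchor points: at $n=2$, $\omega=1$ your exterior condition $(\beta/\delta)^{\omega}<3-2n^{\omega/2}$ gives $\beta<(3-\sqrt{8})\delta$, and at $n=3$ your admissibility window $\omega<\log_n(9/4)$ is exactly the paper's $\omega<2(1-\ln 2/\ln 3)$ with the same range $\beta<(3-2\cdot 3^{\omega/2})^{1/\omega}\delta$, so your general-$n$ choice $\omega(n)=\log_n(3/2)$ settles the conjecture by the paper's own method.
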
 

The author has verified this for $n=3$.  Using the same argument we have the following theorem.
\begin{thm*}
For $ n=3$, let $\delta>0$ be given and $\{ x_j: j\in\mathbb{N} \}$ a CIS for $[-\delta,\delta]^3$.  Then if $0<\omega< 2(1-\ln(2)/\ln(3))$, the conjecture above is true and we can recover $f\in PW_{B_\beta}$ where $0< \beta < (3-2(3)^{\omega/2})^{1/\omega}\delta$.
\end{thm*}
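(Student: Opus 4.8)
The plan is to transcribe the Section~3 and Section~4 development into $\mathbb{R}^3$ with $S_\delta=[-\delta,\delta]^3$ and the kernel $g_\alpha$ whose Fourier transform is $e^{-\alpha|\xi|^\omega}$, then track exactly how $n=3$ and the exponent $\omega$ enter the constants. Three geometric quantities change and drive everything. First, covering the shell $mS_\delta\setminus(m-1)S_\delta$ now requires $m^3-(m-1)^3=3m^2-3m+1$ translates of $S_\delta$ rather than $4(m-1)$, so the analogue of \eqref{A bnd} reads $\|A_m[h]\|^2_{L^2(S_\delta)}\leq m^{-3}\big(m^3-(m-1)^3\big)B^4\|h\|^2_{L^2(S_\delta)}$. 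Second, on that shell $|\xi|\geq (m-1)\delta$, hence $e^{-\alpha|\xi|^\omega}\leq e^{-\alpha\delta^\omega(m-1)^\omega}$, so the factor driving every series is $e^{-\alpha\delta^\omega(m-1)^\omega}$; the series $\sum_{m\geq 2}\big(m^3-(m-1)^3\big)e^{-\alpha\delta^\omega(m-1)^\omega}$ converges for every $\omega>0$ and, for large $\alpha$, is dominated by its $m=2$ term, giving leading decay $e^{-\alpha\delta^\omega}$. Third, the corner of the cube lies at distance $\sqrt{3}\,\delta$, so the multiplier obeys $\|M_\alpha[h]\|_{L^2(S_\delta)}\leq e^{\alpha 3^{\omega/2}\delta^\omega}\|h\|_{L^2(S_\delta)}$ and, by the argument behind \eqref{u bnd}, $\|u_\alpha\|_{L^2(S_\delta)}\leq e^{\alpha 3^{\omega/2}\delta^\omega}\|\hat f\|_{L^2(S_\delta)}$.

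With these three replacements the chain of propositions in Section~3 runs unchanged. The operator $B_\alpha$ satisfies $\|B_\alpha[h]\|_{L^2(S_\delta)}\leq C B^4 e^{-\alpha\delta^\omega}\|h\|_{L^2(S_\delta)}$ for large $\alpha$ (the analogue of \eqref{b bnd}), the representation \eqref{op rep}, namely $\hat f=(I+B_\alpha M_\alpha)\big[\mathcal{F}[I_\alpha[f]]\big]$, persists, and the Corollary yields $\|(I+B_\alpha M_\alpha)^{-1}\|\leq C B^4 e^{\alpha\delta^\omega(3^{\omega/2}-1)}$, the exponent being precisely that of $\|B_\alpha\|\,\|M_\alpha\|$.

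I would then repeat the two-part estimate of Theorem~2. On $S_\delta$, following \eqref{BND1}, $\|\hat f-\mathcal{F}[I_\alpha[f]]\|_{L^2(S_\delta)}\leq \|(I+B_\alpha M_\alpha)^{-1}\|\,\|B_\alpha\|\,\|M_\alpha[\hat f]\|$; since $\hat f$ is supported in $B_\beta$ we have $\|M_\alpha[\hat f]\|\leq e^{\alpha\beta^\omega}\|\hat f\|$, and the three exponents combine to $\beta^\omega+\delta^\omega(3^{\omega/2}-2)$. On $\mathbb{R}^3\setminus S_\delta$, repeating \eqref{BND2} with the new shell decay gives $\|\mathcal{F}[I_\alpha[f]]\|^2_{L^2(\mathbb{R}^3\setminus S_\delta)}\leq C B^4 e^{-2\alpha\delta^\omega}\|u_\alpha\|^2$, and bounding $\|u_\alpha\|$ by splitting $\mathcal{F}[I_\alpha[f]]=\big(\mathcal{F}[I_\alpha[f]]-\hat f\big)+\hat f$ and inserting the $S_\delta$-estimate produces the exterior exponent $\beta^\omega+\delta^\omega(2\cdot 3^{\omega/2}-3)$. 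Setting $n=2,\ \omega=1$ recovers $\sqrt{2}-2$ and $\sqrt{8}-3$, a useful sanity check.

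The binding constraint is the exterior one: both exponents are negative precisely when $\beta^\omega<\delta^\omega\big(3-2\cdot 3^{\omega/2}\big)$, i.e.\ $\beta<\big(3-2\,(3)^{\omega/2}\big)^{1/\omega}\delta$, and this radius is positive exactly when $3^{\omega/2}<3/2$, i.e.\ $\omega<2\big(1-\ln(2)/\ln(3)\big)$ --- the stated hypotheses. Uniform convergence (Theorem~3) follows identically from Fourier inversion and Cauchy--Schwarz, the only change being that $\|\chi_{S_\delta}\|_{L^2}=(2\delta)^{3/2}$ replaces $2\delta$, which does not touch the exponents. The step I would flag as the genuine obstacle, rather than exponent bookkeeping, is legitimacy of the kernel: the lower bound of Proposition~1, and hence existence of the interpolant, requires $g_\alpha$ to be a bona fide positive kernel, i.e.\ $e^{-\alpha|\xi|^\omega}$ to be the transform of a nonnegative integrable function. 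This holds exactly for $0<\omega\leq 2$ (these are the symmetric $\omega$-stable characteristic functions), a range comfortably containing $\omega<2\big(1-\ln(2)/\ln(3)\big)$; once this is secured, the remainder is the constant-tracking described above.
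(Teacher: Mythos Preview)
Your proposal is correct and is exactly the approach the paper intends: the paper gives no detailed proof of this theorem, stating only that ``the author has verified this for $n=3$'' and that ``using the same argument'' one obtains the result, and you have carried out precisely that argument, correctly identifying the three places where the exponent bookkeeping changes (shell covering count $m^3-(m-1)^3$, shell decay $e^{-\alpha\delta^\omega(m-1)^\omega}$, corner distance $\sqrt{3}\,\delta$) and tracking them through \eqref{BND1}--\eqref{BND2} to the binding inequality $\beta^\omega<\delta^\omega(3-2\cdot 3^{\omega/2})$.

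One small overcaution: the lower bound in Proposition~1 does not actually need $g_\alpha\geq 0$; it only needs $\hat g_\alpha\geq 0$, since the quadratic form equals $\int_{\mathbb{R}^3}\hat g_\alpha(\xi)\,|H(\xi)|^2\,d\xi$ and one simply drops the integral over $\mathbb{R}^3\setminus S_\delta$. As $\hat g_\alpha(\xi)=e^{-\alpha|\xi|^\omega}>0$ for every $\omega>0$, this step is immediate and the stable-law restriction $\omega\leq 2$ is not required for the argument (though, as you note, the hypothesis $\omega<2(1-\ln 2/\ln 3)$ lies well inside that range anyway).
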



\end{document}